\begin{document}

\def\cO{\mathcal{O}}
\def\ee{\varepsilon}

\newcommand{\removableFootnote}[1]{}

\newtheorem{theorem}{Theorem}[section]
\newtheorem{corollary}[theorem]{Corollary}
\newtheorem{proposition}[theorem]{Proposition}

\theoremstyle{definition}
\newtheorem{definition}{Definition}[section]



\title{
Fast phase randomisation via two-folds
}
\author{
D.J.W.~Simpson$^{\dagger}$ and M.R.~Jeffrey$^{\ddagger}$\\\\
$^{\dagger}$Institute of Fundamental Sciences\\
Massey University\\
Palmerston North\\
New Zealand\\\\
$^{\ddagger}$Department of Engineering Mathematics\\
University of Bristol\\
Bristol\\
UK
}
\maketitle

\begin{abstract}

A two-fold is a singular point on the discontinuity surface of a piecewise-smooth vector field,
at which the vector field is tangent to the discontinuity surface on both sides.
If an orbit passes through an invisible two-fold
(also known as a Teixeira singularity)
before settling to regular periodic motion,
then the phase of that motion cannot be determined from initial conditions,
and in the presence of small noise
the asymptotic phase of a large number of sample solutions is highly random.
In this paper we show how the probability distribution of the asymptotic phase
depends on the global nonlinear dynamics. 
We also show how the phase of a smooth oscillator can be randomised
by applying a simple discontinuous control law that generates an invisible two-fold.
We propose that such a control law can be used to desynchronise a collection of oscillators,
and that this manner of phase randomisation is fast compared to existing
methods (which use fixed points as phase singularities) because there is
no slowing of the dynamics near a two-fold.


\end{abstract}

\section{Introduction}
\label{sec:intro}
\setcounter{equation}{0}


When a solution of a system of ordinary differential equations approaches a stable periodic orbit, rather than its location in phase space at a given time, a more useful quantity is often the asymptotic (or eventual) phase of the orbit. The asymptotic phase distinguishes the long-term behaviour of different orbits, and sets of points with the same asymptotic phase are called {\it isochrons}. 
If different isochrons intersect they create a {\it phaseless point}, where the asymptotic phase is undefined.
Near a phaseless point the asymptotic phase is highly sensitive to perturbations.
In smooth systems, phaseless points are often unstable equilibria
which can only be approached asymptotically (in backward time).
In piecewise-smooth systems there exist phaseless points 
that can be intersected in finite time, without any slowing of the dynamics suffered near an equilibrium.
The two-fold singularity, an enigma of piecewise-smooth dynamical systems theory, is one such phaseless point. 

There are many reasons why it might be desirable to alter the phase
of an existing oscillatory motion. 
One approach to tackling Parkinson's disease, for instance,
is to disturb the synchronised neural activity associated with physical tremors \cite{HaBe07,NiFe95}.
Desynchronisation can be achieved with pulses \cite{Ta02}, pulse trains \cite{Ta03},
a control that utilises time-delay \cite{PoHa06}, or some other well-chosen feedback law \cite{FrCh12}.
Phase randomisation for prototypical neuron models is achieved in \cite{DaHe09} 
by briefly applying a control that transports the orbit to the close proximity of a phaseless point.
The orbit subsequently returns to periodic motion but now has a different asymptotic phase.
This phase is highly sensitive to the precise point at which the orbit is located when the control is removed, 
so small stochasticity in the system causes the resulting asymptotic phases of different neurons to be highly randomised.

In piecewise smooth dynamical systems, two-fold singularities were first described by Filippov \cite{Fi88},
and have garnered interest as points where both the forward and backward time uniqueness of a flow can break down, in an otherwise deterministic flow \cite{Te90,Je11,CoJe13}.
For a vector field that is discontinuous on some hypersurface -- the {\it discontinuity surface} --
a two-fold is a point where the flow has quadratic tangencies (`folds') to both sides of the surface simultaneously.
Two-folds occur generically at isolated points in three-dimensional piecewise-smooth vector fields, 
and on $(n-2)$-dimensional manifolds in $n$-dimensional vector fields \cite{CoJe13}.
They have been identified in models of circuit systems \cite{DiCo11} and mechanical systems \cite{SzJe14},
and have deep connections to folded nodes of slow-fast systems \cite{DeJe11}. 
The dynamics local to a two-fold depends on whether the vector field on either side is curving toward
or away from the discontinuity surface,  and on the alignments of the two vector fields relative to each other \cite{CoJe13}. 

Here we illustrate the practical implications of a two-fold for phase randomisation.
As well as suggesting how two-folds might affect real systems,
this suggests a use for them as control devices for the desynchronisation of oscillators. To this end we shall consider systems where a two-fold occurs 
either naturally in a piecewise-smooth system, or is introduced to a smooth system via a discontinuous control.
We will simulate orbits in the presence of small noise as they pass close to a two-fold, 
and focus on how their phases are randomised by the nearby presence of the singularity. 
The two-fold is not a zero of the vector field, so there is no slowing of the flow during phase randomisation achieved in this way.

The central result of the paper is summarised by Fig.~\ref{fig:phaseHistograms} (to be produced in \S\ref{sec:stochDyns}), showing the phase distributions of $10^4$ sample solutions from a fixed initial point,  simulated passing through a two-fold singularity. 
The central result of the paper is summarised by Fig.~\ref{fig:phaseHistograms} (to be produced in \S\ref{sec:stochDyns}), showing the phase distributions of $10^4$ sample solutions from a fixed initial point,  simulated passing through a two-fold singularity. 
To produce these figures we shall simulate the normal form of the two-fold,  
add higher order terms such that orbits emanating from the two-fold approach a stable periodic orbit, and add small noise.
Fig.~\ref{fig:phaseHistograms}-A shows a histogram of the phase $\phi_T$, computed relative to a reference time $T$ and limiting to the asymptotic phase as $T \to \infty$.
This shows an apparently uniform distribution, hence a fully randomised phase of solutions from the same initial point. 
Fig.~\ref{fig:phaseHistograms}-B shows an analogous histogram using different higher order terms, showing how nonlinearity of the flow can be used to influence the distribution. 
The black curves show the theoretical approximations we derive for the probability density functions of the phase.


\begin{figure}[t!]
\begin{center}
\setlength{\unitlength}{1cm}
\begin{picture}(16.5,4)
\put(0,0){\includegraphics[height=4cm]{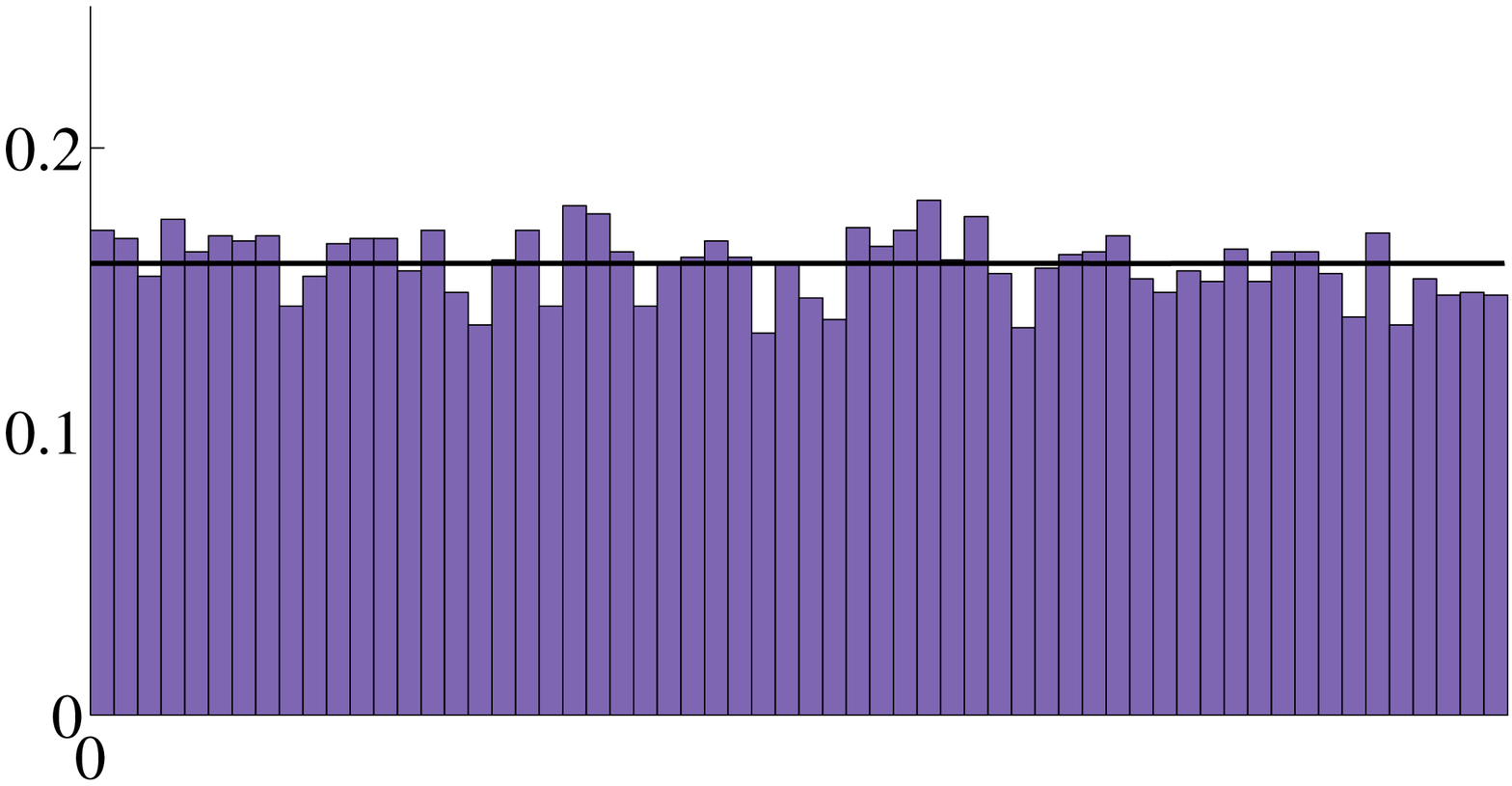}}
\put(8.5,0){\includegraphics[height=4cm]{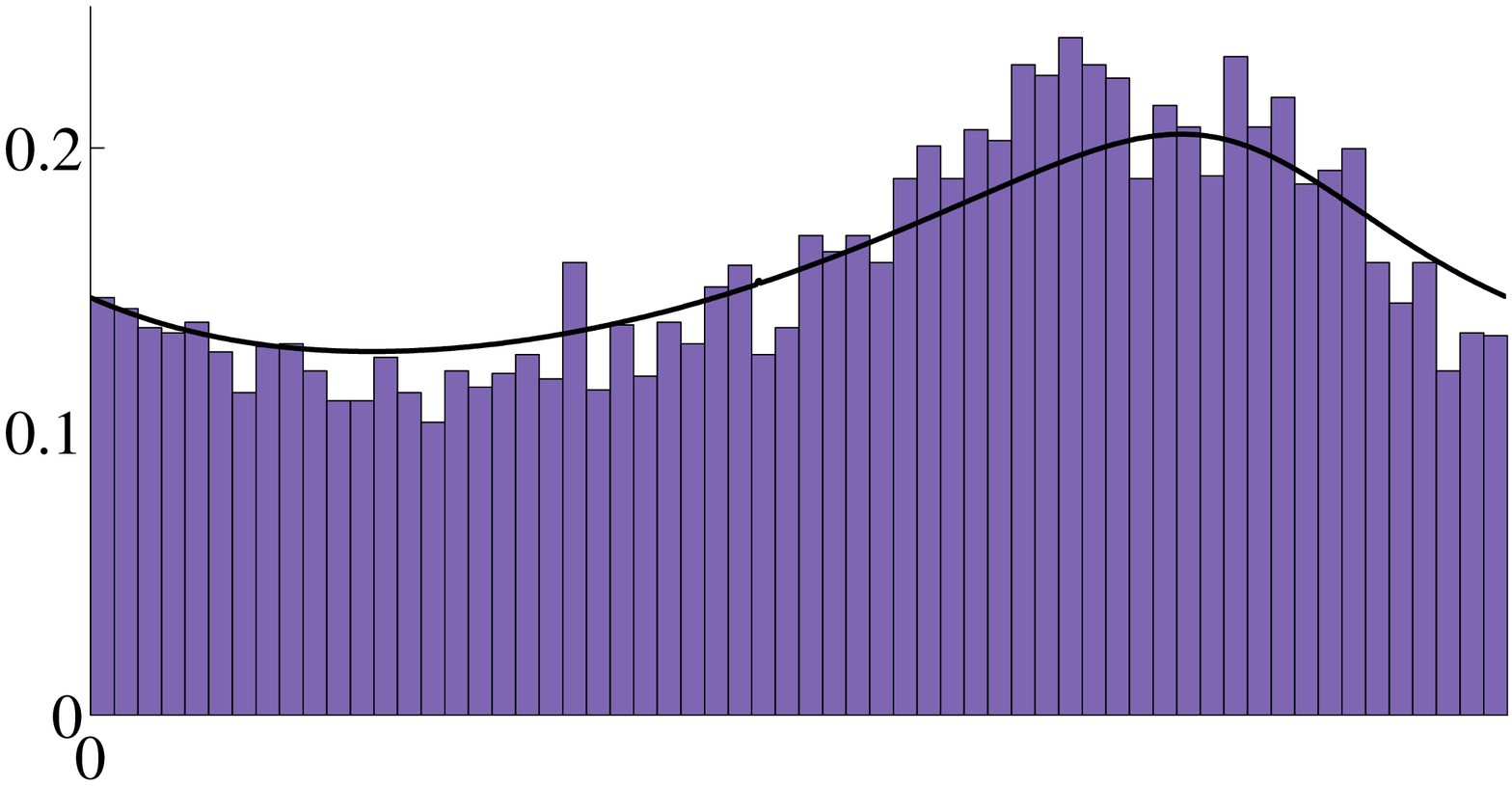}}
\put(1,3.9){\large \sf A}
\put(9.5,3.9){\large \sf B}
\put(5,-.1){\small $\phi_T$}
\put(0,2.5){\small $p_T$}
\put(8.5,2.5){\small $p_T$}
\put(2.47,.06){\footnotesize $\frac{\pi}{2}$}
\put(4.22,.1){\footnotesize $\pi$}
\put(5.97,.06){\footnotesize $\frac{3 \pi}{2}$}
\put(7.72,.1){\footnotesize $2 \pi$}
\put(13.5,-.1){\small $\phi_T$}
\put(10.97,.06){\footnotesize $\frac{\pi}{2}$}
\put(12.72,.1){\footnotesize $\pi$}
\put(14.47,.06){\footnotesize $\frac{3 \pi}{2}$}
\put(16.22,.1){\footnotesize $2 \pi$}
\end{picture}
\caption{
Each panel shows a histogram of the phase $\phi_T$,
of $10^4$ sample solutions from the same initial condition, 
of the general stochastic system (\ref{eq:sde}).
The two panels correspond to different choices for the higher order terms
which affect the global dynamics of (\ref{eq:sde}).
One sample solution corresponding to each panel is shown in Fig.~\ref{fig:phaseqq23} (see caption of this figure for the parameter values used).
The solid curves show a theoretical approximation for the
probability density function of $\phi_T$ as derived in \S\ref{sub:pdf}.
\label{fig:phaseHistograms}
} 
\end{center}
\end{figure}

Building on this, we propose that a periodic orbit in a smooth system can be made to take an excursion through a two-fold, by applying a control action, before returning to smooth periodic motion with a randomised phase. We provide simulations of a simple model in which the effect is to desynchronise a collection synchronised oscillators. 
As discussed in \S\ref{sec:conc},
two-folds have potential advantages over equilibria as phaseless points in the context of phase randomisation.
Although we provide a motivating toy example, calculating and optimising realisable control actions (as in e.g. \cite{DaHe09}) are beyond the scope of this paper. 



In \S\ref{sec:normalForm} we briefly set out the normal form equations of the two-fold singularity,
and review a few pertinent details of the local dynamics, including a novel definition of polar coordinates that naturally fit with the local dynamics (which should be of particular interest for dynamicists studying two-folds). 
We include higher order terms with the normal form in \S\ref{sec:perturbed}, and extend these definitions
before simulating a flow through the two-fold subject to stochastic perturbations in \S\ref{sec:stochDyns}. 
In \S\ref{sec:desynch} we give an example of an application to desynchronising smooth oscillators
using a discontinuous control, with some closing remarks in \S\ref{sec:conc}.

\section{Deterministic dynamics of the normal form}
\label{sec:normalForm}
\setcounter{equation}{0}


There exist three main kinds of two-fold singularity, depending on whether both folds are visible (curving away the discontinuity surface),
or invisible (curving toward the discontinuity surface), or one is visible and one is invisible. In each case 
there exist local conditions under which the flow traverses the singularity in finite time. 
In this paper we focus on the case where both folds are invisible, called an invisible two-fold or Teixeira singularity, in which 
the {\it entire} local flow traverses the singularity in finite time. The added interest of the invisible two-fold is that the local flow winds repeatedly around the singularity, giving oscillatory behaviour. 


%


In three dimensions where $X = (x,y,z)$,
the normal form of the invisible two-fold is the piecewise-smooth system
\begin{equation}
\dot{X} =
\begin{cases} 
F_L(X) \;, & x < 0\;, \\
F_R(X) \;, & x > 0\;,
\end{cases} 
\label{eq:ode}
\end{equation}
where
\begin{equation}
F_L(X) = (z,V^-,1) \;, \qquad
F_R(X) = (-y,1,V^+) \;,
\label{eq:FLFR}
\end{equation}
and $V^-, V^+ \in \mathbb{R}$ are parameters, \cite{JeCo09,CoJe11}.
The two-fold is located at the origin, $X = (0,0,0)$.
The discontinuity surface $x=0$
consists of attracting and repelling sliding regions denoted $A$ and $R$ (where the flow is confined to the surface and so slides along it),
and two crossing regions $C^\pm$ (where the flow passes transversally though the surface),
illustrated in Fig.~\ref{fig:twoFoldSchematic}. 

\begin{figure}[b!]
\begin{center}
\setlength{\unitlength}{1cm}
\begin{picture}(12,6)
\put(0,0){\includegraphics[height=6cm]{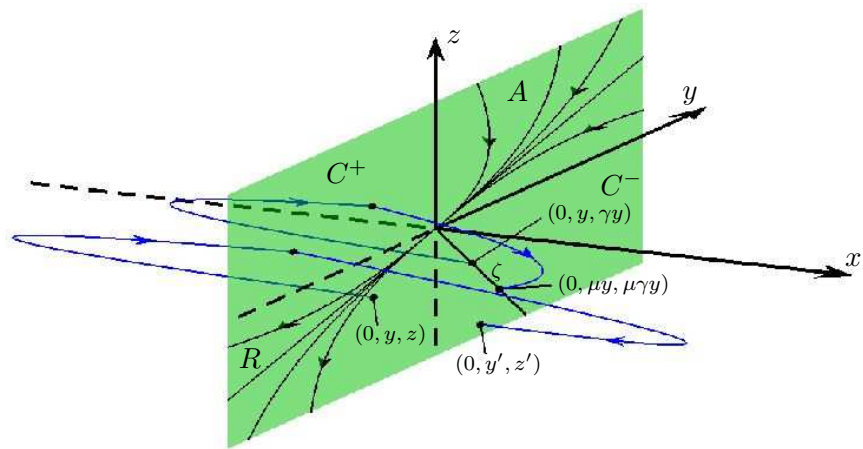}}
\put(11.4,2.5){\small $x$}
\put(9.24,4.72){\small $y$}
\put(6.1,5.46){\small $z$}
\put(4.88,1.52){\scriptsize $(0,y,z)$}
\put(6.22,1.12){\scriptsize $(0,y',z')$}
\put(7.44,3.12){\scriptsize $(0,y,\gamma y)$}
\put(7.58,2.18){\scriptsize $(0,\mu y,\mu \gamma y)$}
\put(6.7,2.36){\scriptsize $\zeta$}
\put(6.9,4.7){\small $A$}
\put(3.34,1.14){\small $R$}
\put(4.5,3.6){\small $C^+$}
\put(8.15,3.45){\small $C^-$}
\end{picture}
\caption{
A schematic of the two-fold of (\ref{eq:ode}).
The discontinuity surface $x=0$ is made up of an attracting sliding region $A$ ($y,z > 0$),
a repelling sliding region $R$ ($y,z < 0$), and two crossing sliding regions,
$C^+$ ($y < 0, z > 0$) and $C^-$ ($y > 0, z < 0$).
Parts of two orbits and their intersections with the discontinuity surface are shown.
One orbit starts in $R$ and has subsequent intersections in $C^+$ and $C^-$.
A second orbit starts and returns to an invariant line $\zeta \in C^-$, see \S\ref{sub:returnMap}.
\label{fig:twoFoldSchematic}
}
\end{center}
\end{figure}

The system is solved across the discontinuity by forming the Filippov system for (\ref{eq:ode}).
Using $e_i$ to denote the $i^{\rm th}$ coordinate vector in $\mathbb{R}^3$,
a {\it Filippov solution} of (\ref{eq:ode}) 
is an absolutely continuous function $\varphi(t)$ that satisfies
\begin{equation}
\begin{split}
\frac{d \varphi}{d t} &= F_L(\varphi(t)) \;, \quad {\rm if~} e_1^{\sf T} \varphi(t) < 0 \;, \\
\frac{d \varphi}{d t} &= F_R(\varphi(t)) \;, \quad {\rm if~} e_1^{\sf T} \varphi(t) > 0 \;, \\
\frac{d \varphi}{d t} &\in \left\{ (1-q) F_L(\varphi(t)) + q F_R(\varphi(t))
~\middle|~ 0 \le q \le 1 \right\} \;,
\quad {\rm if~} e_1^{\sf T} \varphi(t) = 0 \;,
\end{split}
\label{eq:differentialInclusion}
\end{equation}
for almost all
values of $t$ \cite{Fi88}.
We let $\varphi(t;X_0,t_0)$ denote a Filippov solution to (\ref{eq:ode})
with initial condition $\varphi(t_0) = X_0$.
Each $\varphi(t;X_0,t_0)$ is a concatenation of several smooth orbit segments,
including segments of evolution outside the discontinuity surface $x=0$, 
and segments of evolution on $x=0$ referred to as {\it sliding} motion. 
Solving the convex combination in (\ref{eq:differentialInclusion}) for $\dot x=0$ gives the system that sliding motion obeys on $A$ and $R$ as
\begin{equation}
\left[ \begin{array}{c} \dot{y} \\ \dot{z} \end{array} \right] =
\frac{1}{y+z} \left[ \begin{array}{cc}
V^- & 1 \\
1 & V^+
\end{array} \right]
\left[ \begin{array}{c} y \\ z \end{array} \right] \;.
\label{eq:slidingVectorField}
\end{equation}
Various details of the sliding flow can be derived from (\ref{eq:slidingVectorField}),
see \cite{JeCo09,CoJe11}.
Throughout this paper we assume
\begin{equation}
V^- < 0 \;, \qquad
V^+ < 0 \;, \qquad
V^- V^+ > 1 \;,
\label{eq:VmVp}
\end{equation}
in which case a typical orbit of (\ref{eq:differentialInclusion}) and (\ref{eq:slidingVectorField})
will intersect the two-fold in forward time or backward time or both (see e.g. \cite{JeCo09}),
giving the geometry in Fig.~\ref{fig:twoFoldSchematic}.
With (\ref{eq:VmVp}), the matrix in (\ref{eq:slidingVectorField}) has distinct negative eigenvalues,
the smaller of which is $\lambda = \frac{1}{2} ( V^+ + V^- + [{( V^+ - V^- )^2 + 4}]^{1/2} )$.
As orbits of (\ref{eq:ode}) slide into the singularity inside $A$,
or slide out of the singularity in $R$,
they do so tangent to the weak eigenvector of the matrix 
(``weak'' meaning associated with the smallest eigenvalue)
and so are tangent to the line $z = (\lambda - V^-) y$ on $x = 0$.
The time to reach or depart the singularity is finite.
Specifically, the time is $t = (V^- -\lambda- 1)y/\lambda$
from an initial point on the line $z = (\lambda - V^-) y$ with $x = 0$.

In the remainder of this section we
review crossing solutions to (\ref{eq:ode}) in \S\ref{sub:returnMap},
introduce polar coordinates for describing the flow as it spirals away from the two-fold in \S\ref{sub:polar},
and define the notion of ``viable'' Filippov solutions in the context of simulation in \S\ref{sub:viable}.
In later sections we apply these concepts to more general piecewise-smooth systems.

\subsection{Crossing dynamics and an unstable cone}
\label{sub:returnMap}

The left and right half systems of (\ref{eq:ode}),
$\dot{X} = F_L(X)$ and $\dot{X} = F_R(X)$,
have solutions
\begin{equation}
\varphi_L(t;X_0,t_0) = \left(
x_0 + z_0 (t-t_0) + \frac{1}{2} (t-t_0)^2 ,\,
y_0 + V^- (t-t_0) ,\,
z_0 + t - t_0 \right) \;,
\label{eq:leftSolution}
\end{equation}
and
\begin{equation}
\varphi_R(t;X_0,t_0) = \left(
x_0 - y_0 (t-t_0) - \frac{1}{2} (t-t_0)^2 ,\,
y_0 + t - t_0 ,\,
z_0 + V^+ (t-t_0) \right) \;,
\label{eq:rightSolution}
\end{equation}
respectively, where $X_0 = (x_0,y_0,z_0)$.
In what follows we use (\ref{eq:leftSolution}) and (\ref{eq:rightSolution})
to study Filippov solutions $\varphi(t;0,y,z,t_0)$ of (\ref{eq:ode}).
Orbits wind around the regions $A$ and $R$, making repeating crossings of $x=0$ via the regions $C^\pm$, which can be understood with a return map
that has been studied in considerable detail \cite{CoJe11,FeGa12}.
Here we review a few details needed for the ensuing analysis. 

First consider the region $z < 0$ on $x=0$, where the flow $\varphi_L$ is directed away from the discontinuity surface. 
If $y > 0$, then $(0,y,z) \in C^-$, see Fig.~\ref{fig:twoFoldSchematic}, $\varphi_R$ is directed toward the surface so $\varphi(t;0,y,z,t_0)$ immediately enters $x < 0$.
If instead $y < 0$, then $(0,y,z) \in R$ where both $\varphi_L$ and $\varphi_R$ are directed away from the surface, so $\varphi(t;0,y,z,t_0)$ is not uniquely determined
(the may slide along $x=0$ following (\ref{eq:slidingVectorField}), or enter either $x<0$ or $x>0$ at any instant). 
For definiteness, when $y \le 0$ we ignore solutions that follow (\ref{eq:slidingVectorField}) through $R$, and consider the Filippov solution
$\varphi(t;0,y,z,t_0)$ that immediately enters $x < 0$.
From (\ref{eq:leftSolution}) we find that $\varphi(t;0,y,z,t_0)$ resides in $x < 0$
until the later time $t = t_0 - 2 z$ when it is located at $(0,y - 2 V^- z,-z)$.

Second, consider $y < 0$ and assume $\varphi(t;0,y,z,t_0)$ immediately enters $x > 0$.
From (\ref{eq:rightSolution}) and applying analogous arguments to the case $z<0$, we find that $\varphi(t;0,y,z,t_0)$ resides in $x > 0$
until $t = t_0 - 2 y$ when it is located at $(0,-y,-2 V^+ y + z)$.

From these observations we can construct a return map capturing crossing dynamics, formulated in the following proposition (see \cite{CoJe11} for a complete proof).

\begin{proposition}
Suppose $V^-, V^+ < 0$, $V^- V^+ > 1$.
For any $y \in \mathbb{R}$ and $z < 0$,
let $(0,y',z')$ denote the next intersection of $\varphi(t;0,y,z,t_0)$
with the discontinuity surface at a point with $y' > 0$.
If $z < \frac{2 V^+}{4 V^- V^+ - 1} \,y$, then $z' < 0$ and
\begin{equation}
\begin{bmatrix} y' \\ z' \end{bmatrix} =
\begin{bmatrix} -1 & 2 V^- \\ -2 V^+ & 4 V^- V^+ - 1 \end{bmatrix}
\begin{bmatrix} y \\ z \end{bmatrix} \;,
\label{eq:returnMap}
\end{equation}
the smooth orbit from $(y,z)$ to $(y',z')$ taking a time  
\begin{equation}
t - t_0 = 2 \left( (2 V^- - 1) z - y \right) \;,
\label{eq:returnTime}
\end{equation}
whereas if $z > \frac{2 V^+}{4 V^- V^+ - 1} \,y$, then $z' > 0$.
\label{pr:returnMap}
\end{proposition}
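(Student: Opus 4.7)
The plan is to compose two half-excursions of the Filippov solution $\varphi(t;0,y,z,t_0)$ starting from a point on the discontinuity surface with $z<0$, using the closed-form flows (\ref{eq:leftSolution}) and (\ref{eq:rightSolution}) in turn.

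First, since $z<0$ we have $\dot x=z<0$ under $F_L$, and for $y>0$ also $\dot x=-y<0$ under $F_R$, so the orbit enters $x<0$; for $y\le 0$ the convention stated just before the proposition selects the Filippov solution that enters $x<0$ as well. Substituting $(x_0,y_0,z_0)=(0,y,z)$ into (\ref{eq:leftSolution}) and solving $x(t)=0$ yields the unique positive root $t-t_0=-2z$, with landing point $(0,\,y-2V^- z,\,-z)$. Since its $z$-coordinate is positive, this intermediate point lies in $C^+$ when $y-2V^- z<0$, and in $A$ otherwise. Assuming it lies in $C^+$, I would apply $\varphi_R$ from that point: setting the $x$-component of (\ref{eq:rightSolution}) to zero gives the next elapsed time $-2(y-2V^- z)$, and a direct computation produces the next crossing $(0,y',z')$ satisfying the matrix formula (\ref{eq:returnMap}). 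Summing the two elapsed times yields (\ref{eq:returnTime}). Because $V^-V^+>1$ forces $4V^-V^+-1>0$, the sign of $z'=-2V^+ y+(4V^-V^+-1)z$ is governed precisely by comparing $z$ with $\frac{2V^+}{4V^-V^+-1}\,y$.

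The one subtle step is to check that the hypothesis $z<\frac{2V^+}{4V^-V^+-1}\,y$ is consistent with the geometric scenario used: it must force $y-2V^- z<0$ (so the intermediate landing really does lie in $C^+$ and the second excursion/formula apply) and simultaneously give $y'>0$ (so this is the first crossing with positive $y$-coordinate). Since $y'=-(y-2V^- z)$, both requirements reduce to the same inequality. Writing $c_1=\frac{2V^+}{4V^-V^+-1}$ and $c_2=\frac{1}{2V^-}$, a short calculation gives $c_1-c_2=\frac{1}{2V^-(4V^-V^+-1)}$, which is negative under (\ref{eq:VmVp}); hence $c_1<c_2<0$. For $y>0$ this yields $z<c_1 y<c_2 y$, equivalent after multiplication by $2V^-<0$ to $y<2V^- z$; for $y\le 0$ we have $2V^- z>0\ge y$ automatically. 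The converse implication $z>c_1 y\Rightarrow z'>0$ then splits into the case where the first landing is already in $A$ (so $z'=-z>0$ trivially) and the case where the second excursion still applies (so $z'>0$ directly from the formula). The main obstacle is this careful bookkeeping of the two geometric slopes $c_1$ and $c_2$ and of the region to which each intermediate landing belongs; once those are in hand, everything else reduces to routine substitution into (\ref{eq:leftSolution}) and (\ref{eq:rightSolution}).
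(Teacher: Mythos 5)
Your proof is correct and takes the same route the paper sketches just before the proposition (the paper cites \cite{CoJe11} for the full argument): compose the explicit left-half excursion (\ref{eq:leftSolution}) with the right-half excursion (\ref{eq:rightSolution}), sum the two return times, and read off the linear map. Your slope comparison ($c_1 < c_2 < 0$ forcing $y - 2V^- z < 0$ under the hypothesis, so the intermediate landing lies in $C^+$ and the matrix formula applies, while the $z > c_1 y$ case splits into the $A$-landing and $C^+$-landing subcases) correctly fills in the bookkeeping the paper leaves implicit.
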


The return map (\ref{eq:returnMap}) is linear and has a saddle-type fixed point at $(y,z) = (0,0)$.
The unstable multiplier
(i.e. the eigenvalue of the matrix in (\ref{eq:returnMap}) with modulus greater than $1$) is
\begin{equation}
\mu = 2 V^- V^+ \left( 1 + \sqrt{1 - \frac{1}{V^- V^+}} \right) - 1 \;,
\label{eq:mu}
\end{equation}
and the corresponding eigenvector is $(1,\gamma)$ where
\begin{equation}
\gamma = V^+ \left( 1 + \sqrt{1 - \frac{1}{V^- V^+}} \right) \;.
\label{eq:gamma}
\end{equation}
In the full three dimensional space occupied by (\ref{eq:ode}), this eigenvector corresponds to a ray
\begin{equation}
\zeta = \left\{ (0,y,\gamma y) ~\big|~ y>0 \right\} \;,
\label{eq:zeta}
\end{equation}
that emanates from the two-fold.

By evolving the flow forward from $\zeta$ according to (\ref{eq:leftSolution}) and (\ref{eq:rightSolution})
we obtain a surface $\Lambda$ that is given implicitly by
\begin{equation}
x = \begin{cases}
\frac{-1}{V^-} \Xi(y,z) \;, & x \le 0 \\
\frac{1}{V^+} \Xi(y,z) \;, & x \ge 0
\end{cases} \;,
\label{eq:Lambda}
\end{equation}
where
\begin{equation}
\Xi(y,z) = \frac{V^+ y^2 - 2 V^+ V^- y z + V^- z^2}{2 (V^+ V^- - 1)} \;.
\label{eq:Xi}
\end{equation}
Fig.~\ref{fig:unstableManifold} shows a plot of $\Lambda$.
$\Lambda$ is non-differentiable at $x=0$ and consists of
one conical portion on each side of $x=0$.

%

\begin{figure}[t!]
\begin{center}
\setlength{\unitlength}{1cm}
\begin{picture}(8,6)
\put(0,0){\includegraphics[height=6cm]{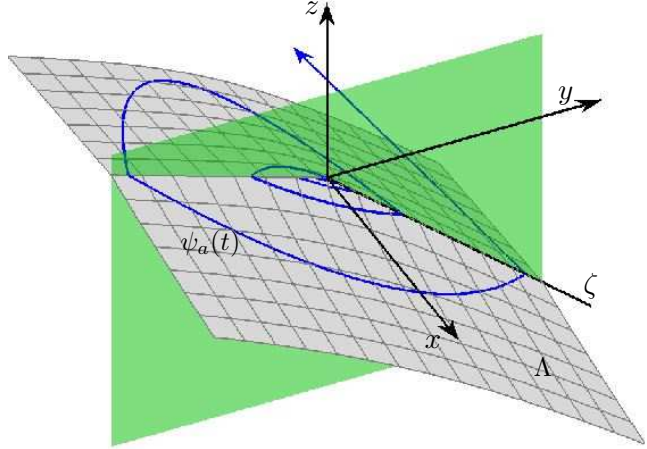}}
\put(5.65,1.37){\small $x$}
\put(7.42,4.69){\small $y$}
\put(4.05,5.81){\small $z$}
\put(7.76,2.1){\small $\zeta$}
\put(2.4,2.67){\footnotesize $\psi_a(t)$}
\put(7.1,1){\footnotesize $\Lambda$}
\end{picture}
\caption{
A plot of (\ref{eq:Lambda}) using $V^- = -0.5$ and $V^+ = -2.5$.
This is the unstable manifold $\Lambda$ (\ref{eq:Lambda}).
One Filippov solution $\psi_a(t)$ is shown in blue.
\label{fig:unstableManifold}
}
\end{center}
\end{figure}

For any initial point on the ray $\zeta$, 
the next intersection of $\varphi(t;0,y,\gamma y,t_0)$
with $C^-$ is $(0,\mu y,\mu \gamma y)$, shown in Fig.~\ref{fig:twoFoldSchematic}.
By substituting $z = \gamma y$ into (\ref{eq:returnTime}), 
we find that this iteration of (\ref{eq:returnMap}) corresponds to an evolution time of
\begin{equation}
t - t_0 = 2 \left( (2 V^- - 1) \gamma - 1 \right) y \;.
\label{eq:returnTimeEig}
\end{equation}
By performing this iteration backward through repeated crossings,
we can construct a Filippov solution that 
approaches the two-fold in backward time.
By taking the limit of infinitely many intersections,
we obtain a Filippov solution that emanates from the two-fold and
has infinitely many intersections with $\zeta$ (in finite time).
Given any $a > 0$, let $\psi_a(t)$ (defined for $t \ge 0$) denote the
unique Filippov solution that is located at the two-fold at $t = 0$,
and located on $\zeta$ at $t = a$.
That is, $\psi_a(0) = (0,0,0)$ and $\psi_a(a) \in \zeta$.
Such an orbit is shown in Fig.~\ref{fig:unstableManifold}.

By using (\ref{eq:returnTimeEig}) and the classical formula for the sum of a geometric series,
we determine the $y$-value of $\psi_a(a)$ to be
\begin{equation}
e_2^{\sf T} \psi_a(a) = \frac{1}{1+\frac{1-\frac{1}{V^-}}{\sqrt{1-\frac{1}{V^- V^+}}}} \,a \;.
\label{eq:yValueOnZeta}
\end{equation}

After $t = a$, $\psi_a(t)$ next intersects $\zeta$ at $t = \mu a$.
Consequently $\psi_a \left( \mu^n a \right) \in \zeta$, for all $n \in \mathbb{Z}$.
It follows that each $\psi_{\mu^n a}(t)$ is the same orbit as $\psi_a(t)$,
and so we can characterise all the $\psi_a(t)$ by restricting our attention to values of $a$ in an interval
$a^* \le a < \mu a^*$, for some $a^* > 0$.

We can therefore write
\begin{equation}
\Lambda = \left\{ \psi_a(t) ~\middle|~ t \ge 0, 1 \le a < \mu \right\} \;.
\label{eq:Lambda2}
\end{equation}
The intersection of $\Lambda$ with $C^-$ is $\zeta$.
Since $\zeta$ corresponds to an unstable eigenvector of the crossing map,
we refer to $\Lambda$ as an unstable manifold of the two-fold.

\subsection{Polar coordinates}
\label{sub:polar}

On $\Lambda$, orbits of (\ref{eq:ode}) spiral out from the two-fold, as in Fig.~\ref{fig:unstableManifold}.
For this reason it is natural to introduce polar coordinates, but the obvious spherical or cylindrical coordinates bear no relation to the system's behaviour and yield complicated expression that give no insight into the dynamical system.
Instead let us seek radial and angular coordinates $r(x,y)$ and $\theta(x,y)$ that 
may be interpreted as ``cylindrical polar coordinates'' for orbits winding around the sliding regions, derived from the geometry of $\Lambda$ such that the dynamical system takes a particularly simple form. 

To obtain a suitable notion of the phase $\theta$, we begin by finding the times $\tau_L > 0$ and $\tau_R < 0$
that an orbit takes to travel
from $\zeta$ to a given point in $x < 0$ and in $x > 0$, respectively.

First, choose any $x < 0$ and $y \in \mathbb{R}$.
We let $y_0 > 0$ be such that
by evolving (\ref{eq:ode}) forward from $(0,y_0,\gamma y_0)$ we arrive at $(x,y,z) \in \Lambda$,
for some $z \in \mathbb{R}$, without again intersecting $x = 0$,
and let $\tau_L > 0$ be the corresponding evolution time.
Note that the value of $z$ is given from (\ref{eq:Lambda}) by $x = \frac{-1}{V^-} \Xi(y,z)$.

We have $\varphi_L(\tau_L;0,y_0,\gamma y_0,0) = (x,y,z)$,
and so by (\ref{eq:leftSolution}),
\begin{equation}
x = \gamma y_0 \tau_L + \frac{1}{2} \tau_L^2 \;, \qquad
y = y_0 + V^- \tau_L \;, \qquad
z = \gamma y_0 + \tau_L \;.
\end{equation}
By solving the first two of these equations simultaneously for $y_0$ and $\tau_L$,
we obtain
\begin{equation}
\tau_L = \frac{y - \sqrt{y^2 - \frac{2 V^-}{V^+} x}}{V^- \left( 1 + \sqrt{1 - \frac{1}{V^- V^+}} \right)} \;,
\label{eq:tauL}
\end{equation}
and $y_0 = y - V^- \tau_L$.

Second, choose any $x > 0$ and $y \in \mathbb{R}$.
We let $y_0 > 0$ be such that
by evolving (\ref{eq:ode}) backward from $(0,y_0,\gamma y_0)$ we arrive at $(x,y,z) \in \Lambda$,
for some $z \in \mathbb{R}$, without first reintersecting $x = 0$,
and let $\tau_R < 0$ be the corresponding evolution time.
Here $z$ is given by $x = \frac{1}{V^+} \Xi(y,z)$,
and in the same manner as above we obtain
\begin{equation}
\tau_R = y - \sqrt{y^2 + 2 x} \;,
\label{eq:tauR}
\end{equation}
and $y_0 = y - \tau_R$.

Using these we can now introduce a set of polar coordinates. We let the positive $y$-axis
correspond to $\theta = 0$ and $r = y$, i.e.
\begin{equation}
r(0,y) = y \;, \qquad
\theta(0,y) = 0 \;, \qquad 
{\rm for~all~} y > 0 \;.
\label{eq:theta0}
\end{equation}
The positive $y$-axis corresponds to points on $\zeta$.
Naturally we want to define $\theta$ such that a change of $2\pi$ corresponds to one complete revolution from $\zeta$ back to itself.
By (\ref{eq:returnTimeEig}), forward evolution from $(0,y,\gamma y)$ returns to $\zeta$ in a time
proportional to $y$.
This suggests that we want $\dot{\theta}$ to be inversely proportional to $y$
(or the distance from the two-fold, $r$).
Moreover, the orbit returns to $\zeta$ at the point $(0,\mu y,\mu \gamma y)$.
That is, the value of $r$ increases from $y$ to $\mu y$
and so changes by an amount proportional to $y$,
suggesting $\dot{r}$ should be constant.
In summary, we would like to define $r$ and $\theta$ so that they satisfy (\ref{eq:theta0}) and
\begin{equation}
\dot{r} = \alpha \;, \qquad
\dot{\theta} = \frac{\beta}{r} \;,
\label{eq:odePolar}
\end{equation}
for some constants $\alpha, \beta > 0$.
The following choice of $r$ and $\theta$ achieves this.


\begin{proposition}
Suppose $V^-, V^+ < 0$ and $V^- V^+ > 1$.
Let
\begin{align}
r(x,y) &= \begin{cases}
\alpha y - (\alpha-1) \sqrt{y^2 - \frac{2 V^-}{V^+} x} \;, & x < 0 \\
\alpha y - (\alpha-1) \sqrt{y^2 + 2 x} \;, & x > 0
\end{cases} \;,
\label{eq:r} \\
\theta(x,y) &= \begin{cases}
\frac{\beta}{\alpha} \ln \left( 1 + \frac{\alpha}{\frac{y}{\tau_L(x,y)} - V^-} \right) \;, & x < 0 \\
\frac{\beta}{\alpha} \ln \left( 1 + \frac{\alpha}{\frac{y}{\tau_R(x,y)} - 1} \right) + 2 \pi \;, & x > 0
\end{cases} \;,
\label{eq:theta}
\end{align}
where $\tau_L$ and $\tau_R$ are given by (\ref{eq:tauL}) and (\ref{eq:tauR}), and
\begin{equation}
\alpha = \frac{1}{1+\frac{1-\frac{1}{V^-}}{\sqrt{1-\frac{1}{V^- V^+}}}} \;, \qquad
\beta = \frac{2 \pi \alpha}{\ln(\mu)} \;,
\label{eq:alphabeta}
\end{equation}
where $\mu$ is given by (\ref{eq:mu}).
Then (\ref{eq:r}) and (\ref{eq:theta}) define a continuous bijection from $(x,y) \in \mathbb{R}^2$
to $(r,\theta) \in (0,0) \cup (0,\infty) \times [0,2 \pi)$ that satisfies (\ref{eq:theta0}).
Moreover, under this coordinate change
the restriction of (\ref{eq:ode}) to $\Lambda$ is given by (\ref{eq:odePolar}) with (\ref{eq:alphabeta}).
\label{pr:polar}
\end{proposition}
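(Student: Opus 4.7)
The plan is to verify, in order, (a) the boundary condition (\ref{eq:theta0}) on the positive $y$-axis, (b) that $(x,y)\mapsto(r,\theta)$ is a continuous bijection onto the claimed image, and (c) that the flow on $\Lambda$ in these coordinates obeys (\ref{eq:odePolar}) with the stated $\alpha,\beta$. Part (a) is immediate by direct substitution: at $(x,y)=(0,y)$ with $y>0$, (\ref{eq:r}) gives $r=\alpha y-(\alpha-1)y=y$, while (\ref{eq:tauL}) gives $\tau_L(0,y)=0$, so the logarithm argument in (\ref{eq:theta}) equals $1$ and $\theta=0$.

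For the dynamical claim (c), I would parametrize the left part $\Lambda\cap\{x<0\}$ via the flow from $\zeta$: every such point equals $\varphi_L(\tau;0,y_0,\gamma y_0,0)$ for unique $y_0>0$ and $\tau\in(0,-2\gamma y_0)$, with explicit formulas $x=\gamma y_0\tau+\tfrac{1}{2}\tau^2$, $y=y_0+V^-\tau$, $z=\gamma y_0+\tau$. The crucial algebraic step is the perfect-square identity
\begin{equation*}
y^2-\tfrac{2V^-}{V^+}x=(y_0-V^- q\,\tau)^2, \qquad q:=\sqrt{1-\tfrac{1}{V^- V^+}},
\end{equation*}
which follows from $\gamma/V^+=1+q$. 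Taking the positive root (valid since $V^-<0$) and substituting into (\ref{eq:r}) collapses the square root to
\begin{equation*}
r = y_0 + V^-\tau\bigl(\alpha+(\alpha-1)q\bigr) = y_0 + \alpha\tau,
\end{equation*}
where the second equality is precisely the defining relation for $\alpha$ in (\ref{eq:alphabeta}). A parallel computation shows $\tau_L(x,y)=\tau$ along the orbit, so $y/\tau_L-V^-=y_0/\tau$, and (\ref{eq:theta}) reduces to $\theta=(\beta/\alpha)\ln(r/y_0)$. Differentiating along the orbit (using $t=\tau$) gives $\dot r=\alpha$ and $\dot\theta=(\beta/\alpha)(\dot r/r)=\beta/r$. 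An entirely analogous argument on $\Lambda\cap\{x>0\}$, using $\varphi_R$ evolved backward from $\zeta$, handles the other side; the additive $2\pi$ in (\ref{eq:theta}) arranges continuity as $\theta\to 2\pi^-$ near $\zeta$ from above, and the choice $\beta=2\pi\alpha/\ln(\mu)$ ensures that one full revolution (during which $r$ grows from $y_0$ to $\mu y_0$ by Proposition~\ref{pr:returnMap}) advances $\theta$ by exactly $2\pi$.

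For (b), the bijection is exhibited by inverting the orbital parametrization: from $\theta=(\beta/\alpha)\ln(r/y_0)$ one recovers $y_0=re^{-\alpha\theta/\beta}$ and then $\tau=(r-y_0)/\alpha$, and substituting into the orbit formulas recovers $(x,y)$ smoothly on each of the left and right half-planes. The two pieces agree on $\zeta$ (both yielding $\theta=0$ at $\tau=0$) and are consistent under the identification of $\theta=2\pi$ with $\theta=0$ at the far return. Continuity across the complementary axis, where the flow from $\zeta$ crosses $x=0$ away from $\zeta$, follows from the matching prescribed by Proposition~\ref{pr:returnMap}.

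The main obstacle I anticipate is the perfect-square identity together with the verification that the algebraic constants collapse as claimed: although each individual step is elementary, the derivation hinges on recognizing $\gamma/V^+=1+q$ and on the specific form of $\alpha$ in (\ref{eq:alphabeta}) being exactly what cancels the $V^-\tau$ correction so that $r$ depends linearly on $\tau$. Careful sign bookkeeping (in particular, $\gamma<0$ so that $\zeta\subset C^-$) and mirroring the left-side computation on the right without sign errors is where the proof most easily derails.
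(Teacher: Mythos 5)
Your proposal is correct and takes essentially the same route as the paper's Appendix~\ref{app:polarDerivation}: both parametrize $\Lambda$ on each half-space by the flow time $\tau$ from $(0,y_0,\gamma y_0)\in\zeta$, use the algebraic collapse $r=y_0+\alpha\tau$ (which is exactly why $\alpha$ has the form in (\ref{eq:alphabeta})), read off $\theta=\frac{\beta}{\alpha}\ln(r/y_0)$, and fix $\beta$ by matching $\theta$ across the negative $y$-axis so one revolution advances $\theta$ by $2\pi$. The only difference is direction of argument — the paper constructs (\ref{eq:r})--(\ref{eq:theta}) by imposing (\ref{eq:odePolar}), while you verify them by substitution — but the parametrization, the perfect-square identity, and the determination of the constants are the same.
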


A constructive proof of Proposition \ref{pr:polar}
using (\ref{eq:leftSolution}) and (\ref{eq:rightSolution})
is given in Appendix \ref{app:polarDerivation}.
Fig.~\ref{fig:contours} illustrates contours of $r(x,y)$ and $\theta(x,y)$ as defined in (\ref{eq:r}) and (\ref{eq:theta}).

Given (\ref{eq:theta}), the negative $y$-axis corresponds to 
$\theta = \frac{2 \pi \ln \left( (1-2 \alpha) \mu \right)}{\ln(\mu)}$.
Also by (\ref{eq:alphabeta}) we have $0 < \alpha < \frac{1}{2}$,
with $\alpha \to 0$ as $V^- V^+ \to 1$
and $\alpha \to \frac{1}{2}$ as $V^- V^+ \to \infty$
(assuming $\frac{V^-}{V^+}$ is fixed as the limit is taken).
Graphically we have found also that $0 < \beta < \pi$.

\begin{figure}[b!]
\begin{center}
\setlength{\unitlength}{1cm}
\begin{picture}(16.5,6)
\put(0,0){\includegraphics[height=6cm]{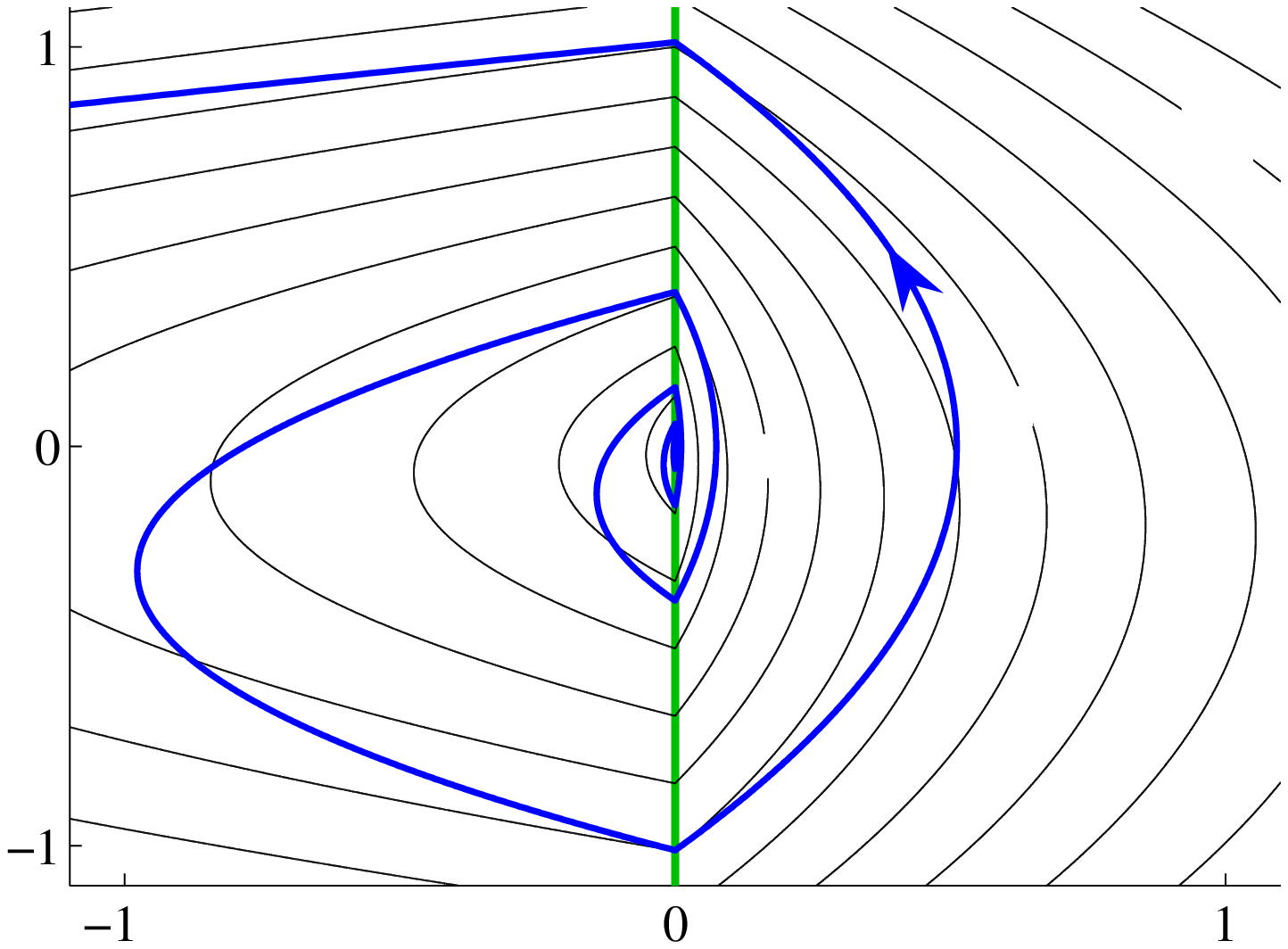}}
\put(8.5,0){\includegraphics[height=6cm]{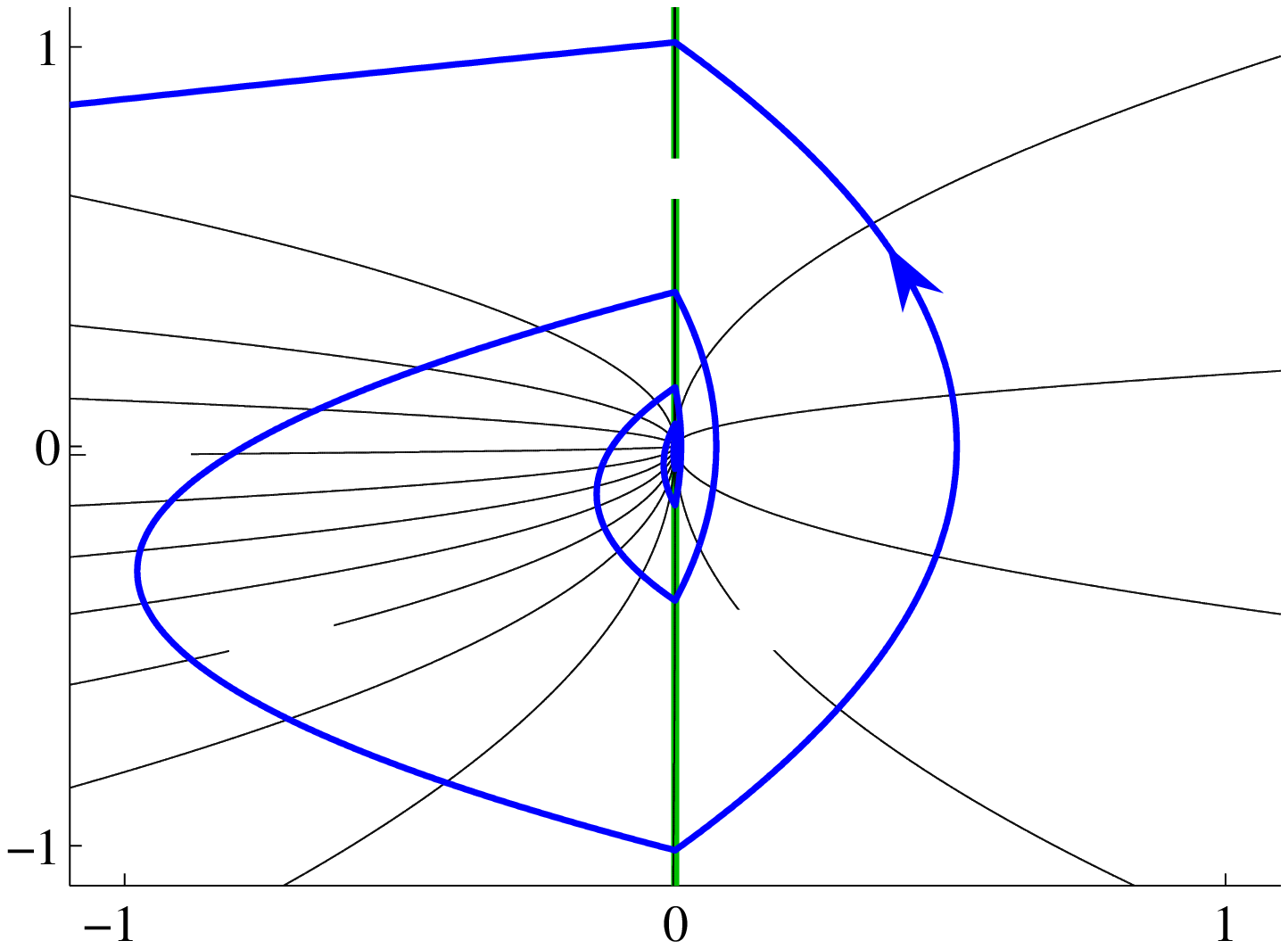}}
\put(4.6,3.2){\tiny $r \hspace{-.5mm}=\hspace{-.7mm} \frac{1}{2}$}
\put(6.15,3.53){\tiny $r \hspace{-.5mm}=\hspace{-.5mm} 1$}
\put(7.25,5.2){\tiny $r \hspace{-.5mm}=\hspace{-.7mm} \frac{3}{2}$}
\put(12.51,4.89){\tiny $\theta \hspace{-.5mm}=\hspace{-.5mm} 0$}
\put(9.2,3.2){\tiny $\theta \hspace{-.5mm}=\hspace{-.7mm} \frac{\pi}{2}$}
\put(10.1,2.08){\tiny $\theta \hspace{-.5mm}=\hspace{-.5mm} \pi$}
\put(13.02,2.12){\tiny $\theta \hspace{-.5mm}=\hspace{-.7mm} \frac{3 \pi}{2}$}
\put(4.15,0){\small $x$}
\put(0,3.24){\small $y$}
\put(12.65,0){\small $x$}
\put(8.5,3.24){\small $y$}
\put(1,6){\large \sf A}
\put(9.5,6){\large \sf B}
\end{picture}
\caption{
Contours $r(x,y)= {\rm constant}$ (\ref{eq:r}) (panel A), and $\theta(x,y)={\rm constant}$ (\ref{eq:theta}) (panel B). 
Contours of $\theta(x,y)$ are parabolas of the form $x = K y^2$, for different values of $K$.
Contours of $r(x,y)$ are parabolas of the form $x = c_1 (y - K)^2 + c_2 K^2$, for different values of $K$,
where $c_1$ and $c_2$ are constants that depend on the values of $V^-$ and $V^+$. 
In each plot one orbit of (\ref{eq:ode}) is shown spiralling away from the origin (shaded blue - colour online). These illustrations use the values $V^- = -0.5$ and $V^+ = -2.5$.
\label{fig:contours}
}
\end{center}
\end{figure}

The right-hand-side of (\ref{eq:odePolar}) is defined everywhere except at $r=0$ (the two-fold)
and can be solved explicitly.
Taking $r \to 0$ as $t \to t_0$ as an initial condition,
for all $t > t_0$ the solution to (\ref{eq:odePolar}) is given by
\begin{equation}
r(t) = \alpha (t-t_0) \;, \qquad
\theta(t) = \left( \frac{\beta}{\alpha} \ln(t-t_0) + C \right) {\rm ~mod~} 2 \pi \;,
\label{eq:solnPolar}
\end{equation}
for some $C \in [0,2 \pi)$.
Therefore the Filippov solution $\psi_a(t)$, defined in \S\ref{sub:returnMap},
may be written in polar coordinates as
\begin{equation}
\psi_a(t) = \left( r(t), \theta(t) \right) =
\left( \alpha t ,\, \frac{\beta}{\alpha} \ln \left( \frac{t}{a} \right) {\rm ~mod~} 2 \pi \right) \;.
\label{eq:solnPolarAlt}
\end{equation}


\subsection{Viable Filippov solutions}
\label{sub:viable}

The following result shows that orbits emanating from the two-fold
either remain in the repelling sliding region $R$ for some time, or they leave $R$ and evolve on $\Lambda$ for all later times.

\begin{proposition}
Let $\varphi(t) = \varphi(t;0,0,0,0)$ be a Filippov solution of (\ref{eq:ode})
located at the two-fold at $t = 0$.
Then either: (i) $\varphi(t) \in \Lambda$ for all $t > 0$,
or (ii) $\varphi(t) \in R$ for some $t > 0$ and $\varphi(t) \not\in \Lambda$ for all $t > 0$.
\label{pr:exit}
\end{proposition}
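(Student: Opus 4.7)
The plan is to establish the dichotomy by combining an invariance lemma for $\Lambda$ with a classification of the Filippov solutions that can emanate from the origin. The invariance statement I would prove is: if $\varphi(t_0) \in \Lambda$ for some $t_0 > 0$, then $\varphi(t) \in \Lambda$ for all $t > 0$. The representation (\ref{eq:Lambda2}) exhibits $\Lambda \setminus \{0\}$ as the union of the one-parameter family $\{\psi_a : a \in [1, \mu)\}$, each of whose members is itself a Filippov solution through the origin. Away from the origin the Filippov flow is uniquely determined, so if $\varphi(t_0) = \psi_a(s_0)$ for some $s_0 > 0$ and $a \in [1,\mu)$, then $\varphi$ and $\psi_a$ coincide on a neighbourhood of $t_0$, and by continuation on all of $t > 0$. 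The contrapositive is exactly what case (ii) requires: if $\varphi(t_1) \notin \Lambda$ for even one $t_1 > 0$, then $\varphi(t) \notin \Lambda$ for every $t > 0$.

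Next I would classify the Filippov solutions $\varphi$ with $\varphi(0) = 0$ by inspecting (\ref{eq:leftSolution}) and (\ref{eq:rightSolution}). Neither $\varphi_L(t;0,0,0,0) = (t^2/2, V^- t, t)$ nor $\varphi_R(t;0,0,0,0) = (-t^2/2, t, V^+ t)$ is admissible, since each immediately enters the half-space on which it is not defined; thus the solution cannot leave the origin along a single smooth arc. The text following (\ref{eq:VmVp}) records that sliding on $A$ carries orbits into the origin in forward time, so forward-time sliding from the origin is not possible on $A$, whereas on $R$ such a sliding orbit does emanate from the origin tangent to the weak eigenvector of (\ref{eq:slidingVectorField}). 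The only remaining option is the Zeno-accumulation family $\psi_a$ of \S\ref{sub:returnMap}, whose members leave the origin while staying on $\Lambda$. Hence every Filippov solution with $\varphi(0) = 0$ either coincides with some $\psi_a$ (case (i)) or has $\varphi(t) \in R$ for all sufficiently small $t > 0$; combined with the invariance lemma the second alternative delivers case (ii).

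I expect the principal difficulty to be the classification near the origin: the coexistence of Zeno-accumulating crossings and repelling sliding makes the local solution set intricate, and one must rule out hybrid behaviour such as an orbit that slides briefly on $R$ and then rejoins $\Lambda$ through a later sequence of crossings. This is precisely where the invariance lemma closes the loop, since it prohibits any subsequent entry onto $\Lambda$ once the orbit has left it; coupled with the saddle structure of the return map (\ref{eq:returnMap}), whose unstable direction $\zeta$ lies in $C^-$ rather than in $R$, any exit from $R$ delivers initial data off $\zeta$ that diverges from the unstable direction under forward crossing iteration and therefore never reaches $\Lambda$.
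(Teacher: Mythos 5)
Your plan reorganises the argument around two claims: an invariance lemma for $\Lambda$, and a classification of solutions leaving the origin. The classification is sound and is essentially the content of \S\ref{sub:returnMap} and the Corollary: solutions emanating from the two-fold either follow some $\psi_a$ (hence lie on $\Lambda$) or initially slide on $R$. The paper instead splits directly on whether the orbit ever intersects $R$, and handles each branch with a tailored argument, so this is a genuinely different decomposition.

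The problem is the invariance lemma and especially how you use it. Your continuation argument from $\varphi(t_0)=\psi_a(s_0)$ is clean going forward in time, but going backward it only carries you to $t=t_0-s_0$, the time at which $\psi_a$ reaches the origin. To obtain the lemma as stated (membership in $\Lambda$ for \emph{all} $t>0$), you must separately rule out that the origin is revisited, i.e.~that $t_0-s_0>0$; without that, a $\varphi$ that wanders through $R$, returns to the origin via $A$, and only then follows a $\psi_a$ would escape your argument. More seriously, in the last paragraph you write that the invariance lemma ``prohibits any subsequent entry onto $\Lambda$ once the orbit has left it.'' That is not what (forward) invariance gives you: it forbids an orbit that is \emph{on} $\Lambda$ from leaving it, not an orbit that is off $\Lambda$ from landing on it. The contrapositive you need for case (ii) requires the backward half of the lemma — exactly the part whose proof is incomplete. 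In fact the work is really done by the sentence you append afterwards: ejection from $R$ gives crossing data off $\zeta$, and the linear return map (\ref{eq:returnMap}) keeps such data off $\zeta$ (and hence off $\Lambda$) for all subsequent crossings. That is precisely the paper's argument for the case $\varphi(t)\in R$, and it is what actually closes the gap — the invariance lemma is a detour that, as presented, does not. I would either drop the lemma and argue directly as the paper does, or strengthen its proof by first showing that orbits through $R$ never return to the origin and never reach $\zeta$, which again reduces to the return-map argument.
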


\begin{proof}
First, suppose $\varphi(t) \in R$ for some $t > 0$.
Since Filippov solutions to (\ref{eq:ode}) can only enter $R$ from the two-fold,
either $\varphi(t) \in R$ for all $t > 0$,
in which case clearly $\varphi(t) \not\in \Lambda$ for all $t > 0$,
or $\varphi(t) \in R$ on time interval $(0,b]$, 
and at $t = b$, $\varphi(t)$ is ejected into either $x < 0$ or $x > 0$.
Suppose without loss of generality that $\varphi(t)$ enters $x < 0$.
Then subsequent crossing motion (for all $t \ge 0$, it can be shown)
is described by the return map (\ref{eq:returnMap}) starting from $\varphi(b)$.
Since $\varphi(b) \not\in \zeta$,
iterations under (\ref{eq:returnMap}) do not lie on $\zeta$,
hence $\varphi(t) \not\in \Lambda$ for all $t > 0$.

Second, suppose $\varphi(t) \not\in R$ for all $t > 0$.
By solving backward in time, the only way that solutions can reach the two-fold without
intersecting $R$ is through intersections with $\zeta$.
This is because orbits cannot reach the two-fold by evolving backward in time on the attracting sliding region.
Nor can solutions reach the two-fold by evolving purely in either the left half-space or the right half-space
because such evolution follows (\ref{eq:leftSolution}) and (\ref{eq:rightSolution}).
The only remaining possibility is to reach the two-fold via crossing motion,
and in view of the saddle-type nature of $(y,z) = (0,0)$ for (\ref{eq:returnMap}),
this can only be achieved along the unstable eigenvector.
Therefore $\varphi(t) \in \Lambda$ for all $t > 0$.
\end{proof}


Now we introduce the notion of a ``viable'' Filippov solution
to represent an orbit that is robust for the purpose of forward time numerical simulation.
Any orbit that evolves from a point in $R$, simulated using discretisation or by modelling the switch using hysteresis, time-delay, or noise,
is likely to be immediately ejected into either $x < 0$ or $x > 0$.
In view of Proposition \ref{pr:exit}, any simulated orbit that arrives at the two-fold
is likely to subsequently evolve on $\Lambda$ (or at least very near $\Lambda$), as observed in \cite{CoJe11}.
Indeed, perturbations of a planar two-fold by hysteresis, time-delay and noise studied in \cite{Si14c}
confirm that perturbed systems follow Filippov solutions provided they do not lie on a repelling sliding region.

\begin{definition}
A Filippov solution is said to {\it viable}
if it does not intersect (or travel along) a repelling sliding region in forward time.
\end{definition}

For the systems studied in this paper, $\varphi(t;X_0,t_0)$ is viable if and only if, for all $t > t_0$,
$\varphi(t;X_0,t_0) \not\in R$.
The idea is that viable Filippov solutions are the most relevant in real systems or in simulations.
The last result of this section follows immediately from Proposition \ref{pr:exit}
and provides us with a complete characterisation of viable Filippov solutions
to (\ref{eq:ode}) that pass through the two-fold.

%

\begin{corollary}
Let $\varphi(t) = \varphi(t;0,0,0,0)$ be a Filippov solution of (\ref{eq:ode})
that is located at the two-fold at $t = 0$. 
Then $\varphi(t)$ is viable if and only if $\varphi(t) = \psi_a(t)$, for some $a$.
\end{corollary}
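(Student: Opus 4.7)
The plan is to deduce this corollary almost immediately from Proposition \ref{pr:exit} combined with the description (\ref{eq:Lambda2}) of the unstable manifold as $\Lambda = \{\psi_a(t) \mid t \ge 0,\ 1 \le a < \mu\}$. The two implications are handled separately, and neither requires any new computation beyond facts already assembled in \S\ref{sub:returnMap} and \S\ref{sub:polar}.

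For the ``only if'' direction, I would start from the hypothesis that $\varphi$ is viable, which by definition means $\varphi(t) \notin R$ for all $t > 0$. This immediately excludes alternative (ii) of Proposition \ref{pr:exit}, so alternative (i) must hold and $\varphi(t) \in \Lambda$ for every $t > 0$. By (\ref{eq:Lambda2}), $\Lambda$ is swept out by the family $\{\psi_a\}$, so in particular $\varphi$ must intersect $\zeta$ at some positive time: take the first such time and call it $a$. Now $\varphi$ and $\psi_a$ are both Filippov solutions sitting at the two-fold at $t=0$ and at the same point of $\zeta$ at $t=a$, so by the backward uniqueness of crossing solutions to (\ref{eq:ode}) along $\zeta$ (which follows from (\ref{eq:leftSolution}), (\ref{eq:rightSolution}) and the geometric contraction of return times (\ref{eq:returnTimeEig})) they agree on $[0,a]$, while forward uniqueness of crossing segments extends the agreement to all $t \ge 0$.

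For the ``if'' direction, suppose $\varphi = \psi_a$ for some $a > 0$. By construction $\psi_a(t) \in \Lambda$ for all $t \ge 0$, and $\Lambda$ meets the discontinuity surface only transversally along the crossing regions $C^\pm$: indeed $\Lambda \cap \{x = 0\}$ consists precisely of $\zeta \subset C^-$ together with its image $\{(0,-y,-2V^+ y + \gamma y) \mid y>0\} \subset C^+$ under the crossing step derived from (\ref{eq:rightSolution}). In particular $\Lambda$ is disjoint from $R$, so $\psi_a$ never meets $R$ and is therefore viable.

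The only point requiring care is the forward uniqueness step in the ``only if'' direction, because Filippov solutions are in general non-unique when they touch the discontinuity surface; however, $\psi_a$ crosses $\{x=0\}$ only through $C^\pm$, where the Filippov dynamics is unambiguously one-sided, so this is not a genuine obstacle. Everything else in the argument is a direct appeal to Proposition \ref{pr:exit} and the definition of $\psi_a$.
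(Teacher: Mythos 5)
Your proof follows the same route the paper intends — the paper simply asserts that the corollary ``follows immediately'' from Proposition \ref{pr:exit}, so what you have done is flesh out exactly the argument that is implicitly being invoked — and the overall logic is sound. A few small corrections are worth making, however. First, in the ``only if'' direction you propose to ``take the first such time'' at which $\varphi$ meets $\zeta$. There is no first such time: the intersection times of any orbit on $\Lambda$ with $\zeta$ form the bi-infinite sequence $\mu^n a$, $n \in \mathbb{Z}$, which accumulates at $t=0$ from above. You should instead take \emph{any} time $a>0$ with $\varphi(a)\in\zeta$; since the radial coordinate on $\Lambda$ satisfies $r(\varphi(t))=\alpha t$ when $\varphi(0)=0$, this forces $\varphi(a)=\psi_a(a)$, and the rest of your uniqueness argument then goes through. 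Second, your parametrisation of $\Lambda\cap C^+$ is obtained by applying the crossing map of (\ref{eq:rightSolution}) to the points of $\zeta$, but that map is defined for $y<0$, whereas $\zeta$ has $y>0$; the physically correct crossing step from $\zeta$ is through $x<0$ via (\ref{eq:leftSolution}), giving $\{(0,(1-2V^-\gamma)y,-\gamma y)\mid y>0\}$. As it happens, the set you wrote down is the same ray (it is the preimage of $\zeta$ under the $x>0$ crossing, and the identity $V^-\gamma^2 - 2V^-V^+\gamma + V^+ = 0$ shows the two descriptions coincide), so the conclusion $\Lambda\cap R=\varnothing$ is unaffected, but the stated derivation is the wrong half of the crossing cycle. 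Neither issue changes the structure of the proof; both are easy fixes.
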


\section{Deterministic global dynamics}
\label{sec:perturbed}
\setcounter{equation}{0}

The vector field near a generic invisible two-fold in a three-dimensional system
can be transformed to
\begin{equation}
\dot{X} = \begin{cases}
F_L(X) + G_L(X) \;, & x < 0 \\
F_R(X) + G_R(X) \;, & x > 0
\end{cases} 
\label{eq:odePerturbed}
\end{equation}
where $F_L$ and $F_R$ are the constituents of the normal form (\ref{eq:FLFR}), and
\begin{equation}
G_{L,R}(X) = \left( \cO \left( x,|y,z|^2 \right), \cO(|X|), \cO(|X|) \right) \;, 
\label{eq:GLGR}
\end{equation}
represent higher order terms.

As described in \cite{CoJe11}, there exists a surface $\tilde{\Lambda}$
(that coincides with $\Lambda$ in the limit $|X| \to 0$)
on which viable Filippov solutions to (\ref{eq:odePerturbed}) evolve.
This surface intersects $x = 0$ with $y > 0$ on a curve $\tilde{\zeta}$
that matches $\zeta$ to first order.
We let $\tilde{\psi}_a(t)$ denote a viable Filippov solution to (\ref{eq:odePerturbed})
with $\tilde{\psi}_a(0) = 0$ and $\tilde{\psi}_a(a) \in \tilde{\zeta}$.

Next, in \S\ref{sub:phase}, we define the asymptotic phase for orbits of (\ref{eq:odePerturbed}), 
study the associated isochrons in \S\ref{sub:isochrons},
and find times of return to the discontinuity surface in \S\ref{sub:returnTimes}.

\subsection{The asymptotic phase of an orbit}
\label{sub:phase}

Let us suppose that (\ref{eq:odePerturbed}) has a stable periodic orbit $\Gamma$ of period $\tau$
such that orbits emanating from the two-fold on $\tilde{\Lambda}$ approach $\Gamma$.
Examples are given below and also in \cite{CoJe11}.
In the examples considered below,
$\Gamma$ has exactly two intersections with the discontinuity surface,
one with $y > 0$ and one with $y < 0$.
We use the intersection point with $y > 0$ as a reference point with zero phase.

Let $\tilde{\varphi}(t;X_0,t_0)$ be a Filippov solution to (\ref{eq:odePerturbed})
that limits to $\Gamma$ as $t \to \infty$.
The {\it phase} of $\tilde{\varphi}$, relative to a reference time $T$
assumed to be sufficiently large that the orbit lies close to $\Gamma$, will be defined as
\begin{equation}
\phi_T = \frac{2 \pi (T-s_T)}{\tau} {\rm ~mod~} 2 \pi \;, \qquad
s_T = \max_{t \le T} \left[ x(t) = 0 ,\, y(t) > 0 \right] \;.
\label{eq:phiT}
\end{equation}
Note that $s_T \le T$ is the previous time at which the orbit lies on $x=0$ with $y > 0$.
The ``${\rm mod~} 2 \pi$'' ensures $\phi_T = [0,2 \pi)$,
but is almost redundant because the orbit is close to $\Gamma$
and so $T - s_T$ is unlikely to be greater than $\tau$.
Assuming the forward orbit of $X_0$ is unique and converges to $\Gamma$,
the {\it asymptotic phase} of $X_0$ is defined as 
\begin{equation}
\phi = \lim_{T \to \infty} \phi_T \;.
\label{eq:phi}
\end{equation}

\subsection{Isochrons}
\label{sub:isochrons}

An {\it isochron} is a set of points with the same asymptotic phase \cite{Gu75,Wi01}.
Fig.~\ref{fig:isochron} shows five isochrons on $\tilde{\Lambda}$, produced for (\ref{eq:odePerturbed}) with	
\begin{equation}
V^- = -0.5 \;, \qquad V^+ = -2.5 \;,
\label{eq:VmVpEx23}
\end{equation}
and
\begin{equation}
G_L(X) = G_R(X) = -X \;.
\label{eq:GLGRex2}
\end{equation}
$\tilde{\Lambda}$ was computed by fitting a mesh to $1000$ numerically computed forward orbits.
The stable periodic orbit $\Gamma$ forms the boundary of this surface.
The isochrons were computed by interpolating between computed points on the forward orbits
and correspond to $\phi = \frac{2 \pi k}{5}$ for $k = 0,\ldots,4$.
More sophisticated methods for computing isochrons are discussed in \cite{OsMo10,LaKr14}.
Each isochron emanates transversally from $\Gamma$
(as is the case for a generic stable periodic orbit of a smooth system).

\begin{figure}[t!]
\begin{center}
\setlength{\unitlength}{1cm}
\begin{picture}(10,7.5)
\put(0,0){\includegraphics[height=7.5cm]{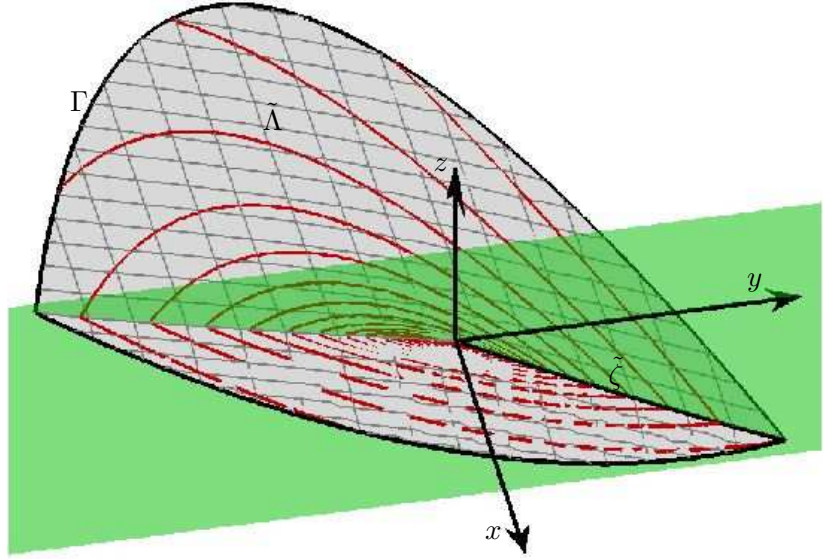}}
\put(6.35,.3){\small $x$}
\put(9.83,3.67){\small $y$}
\put(5.66,5.2){\small $z$}
\put(.83,6){\small $\Gamma$}
\put(3.39,5.75){\small $\tilde{\Lambda}$}
\put(7.99,2.37){\small $\tilde{\zeta}$}
\end{picture}
\caption{
Five isochrons on the surface $\tilde{\Lambda}$ for the system (\ref{eq:odePerturbed})
with (\ref{eq:VmVpEx23}) and (\ref{eq:GLGRex2}).
\label{fig:isochron}
}
\end{center}
\end{figure}

\subsection{Times of return to the discontinuity surface}
\label{sub:returnTimes}

Here we define a return time function $f$ for the curve $\tilde{\zeta}$.
First note that $\tilde{\psi}_a(a) \in \tilde{\zeta}$ by definition.
Then for any $a > 0$, let $t = f(a) > a$ be the next time 
at which $\tilde{\psi}_a(t) \in \tilde{\zeta}$.
Fig.~\ref{fig:phasef2} shows a plot of $f$ using (\ref{eq:VmVpEx23}) and (\ref{eq:GLGRex2})
and was computed by numerical simulation.

For small $a$, $f(a) \approx \mu a$,
because near the two-fold the higher order terms $G_L$ and $G_R$ have little effect
and the return time is $\mu a$ for the normal form (\ref{eq:ode}), as stated in \S\ref{sub:returnMap}.
For large $a$, $\tilde{\psi}_a(t)$ is located near $\Gamma$ and so $f(a) \approx a + \tau$.

In the next section we use $f$ to extrapolate the probability density function
for $\phi_T$ from points near the two-fold to points near $\Gamma$.

\begin{figure}[t!]
\begin{center}
\setlength{\unitlength}{1cm}
\begin{picture}(8,6)
\put(0,0){\includegraphics[height=6cm]{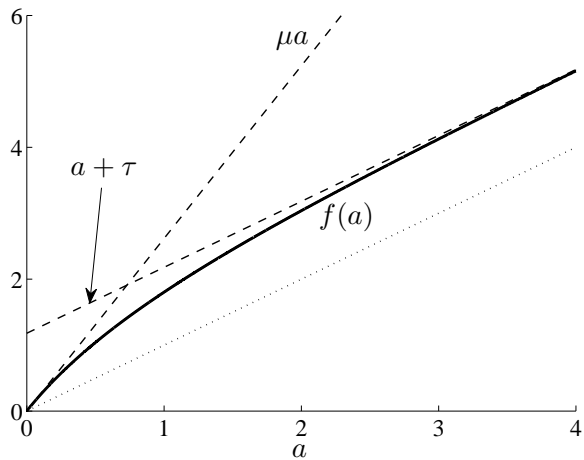}}
\put(4.08,0){\small $a$}
\put(4.44,3.1){\small $f(a)$}
\put(3.87,5.5){\small $\mu a$}
\put(1.14,3.75){\small $a + \tau$}
\end{picture}
\caption{
The return time function $f$ for the system (\ref{eq:odePerturbed})
with (\ref{eq:VmVpEx23}) and (\ref{eq:GLGRex2}).
\label{fig:phasef2}
}
\end{center}
\end{figure}

\section{Stochastic dynamics and phase randomisation}
\label{sec:stochDyns}
\setcounter{equation}{0}

In this section we study a stochastic perturbation of (\ref{eq:odePerturbed}) given by the three-dimensional stochastic differential equation
\begin{equation}
dX(t) =
\left\lbrace
\begin{array}{lc}
F_L(X(t)) + G_L(X(t)) \;, & x(t) < 0 \\
F_R(X(t)) + G_R(X(t)) \;, & x(t) > 0
\end{array}
\right\rbrace dt +
\ee D \,dW(t) \;,
\label{eq:sde}
\end{equation}
where $W(t)$ is a standard three-dimensional Brownian motion,
$D$ is a $3 \times 3$ matrix of constants, and $0 \le \ee \ll 1$ represents the noise amplitude.
Given a sample solution to (\ref{eq:sde}), $\tilde{\varphi}_{\ee}(t;X_0,0)$,
and a time $T$, we define $s_T$ and $\phi_T$ by (\ref{eq:phi}) in the same manner as for the deterministic system (\ref{eq:odePerturbed}).

Here we show that the asymptotic phase is highly randomised for sample solutions to (\ref{eq:sde})
that pass close to the two-fold before approaching a stable periodic orbit.
We provide numerical evidence for this in \S\ref{sub:randomisation},
then derive theoretical approximations to the phase distribution in \S\ref{sub:pdf}.
To illustrate the results we use (\ref{eq:VmVpEx23}) and two choices for $G_L$ and $G_R$,
namely (\ref{eq:GLGRex2}) and
\begin{equation}
G_L(X) = G_R(X) = (-x^3,-y^3,0) \;,
\label{eq:GLGRex3}
\end{equation}
because they provide substantially different phase distributions.
With different values of $V^-$ and $V^+$ and different choices for $G_L$ and $G_R$
we have observed similar results.

\subsection{Sample solutions and phase randomisation}
\label{sub:randomisation}

\begin{figure}[t!]
\begin{center}
\setlength{\unitlength}{1cm}
\begin{picture}(16.5,10)
\put(0,4){\includegraphics[height=6cm]{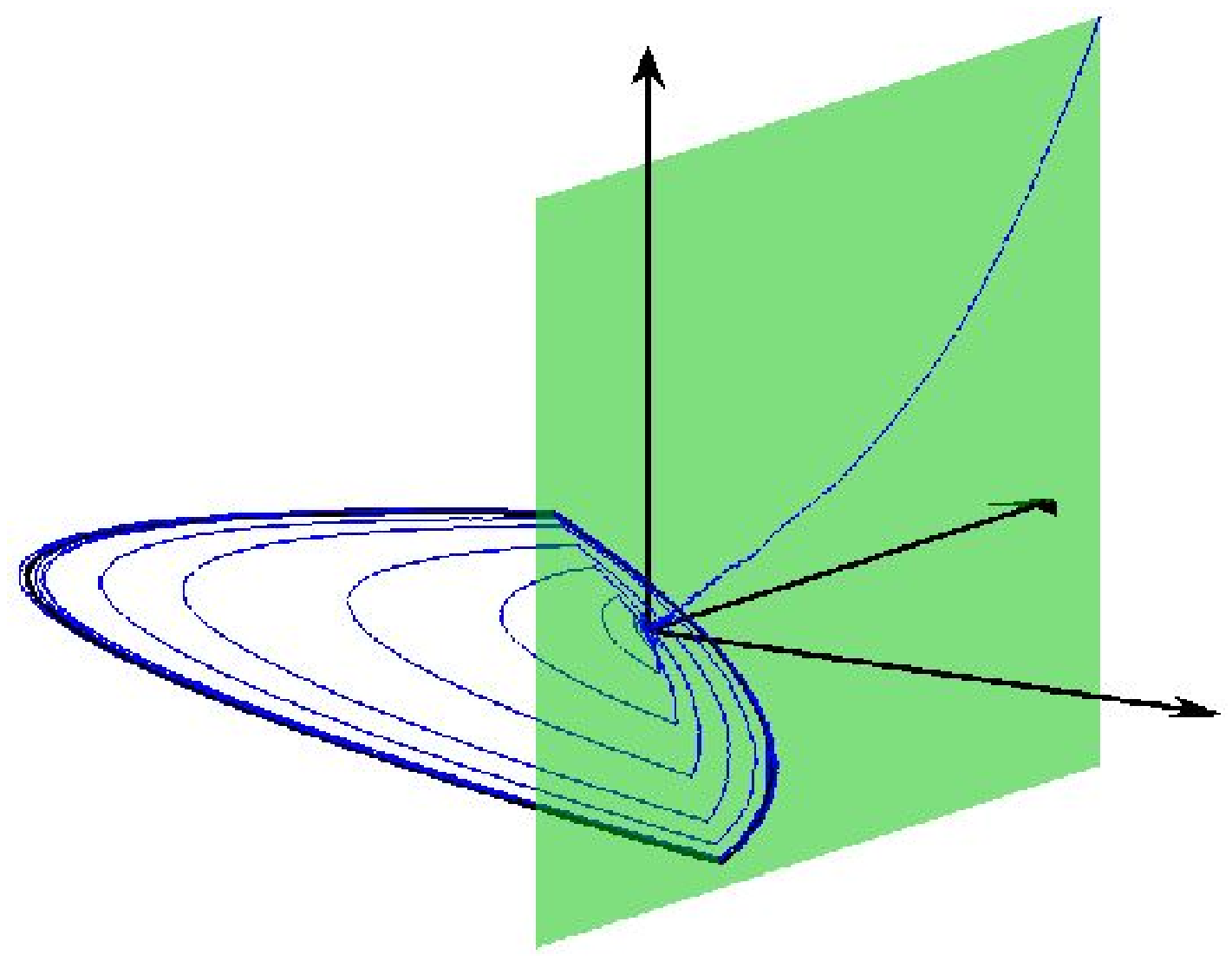}}
\put(0,0){\includegraphics[height=4cm]{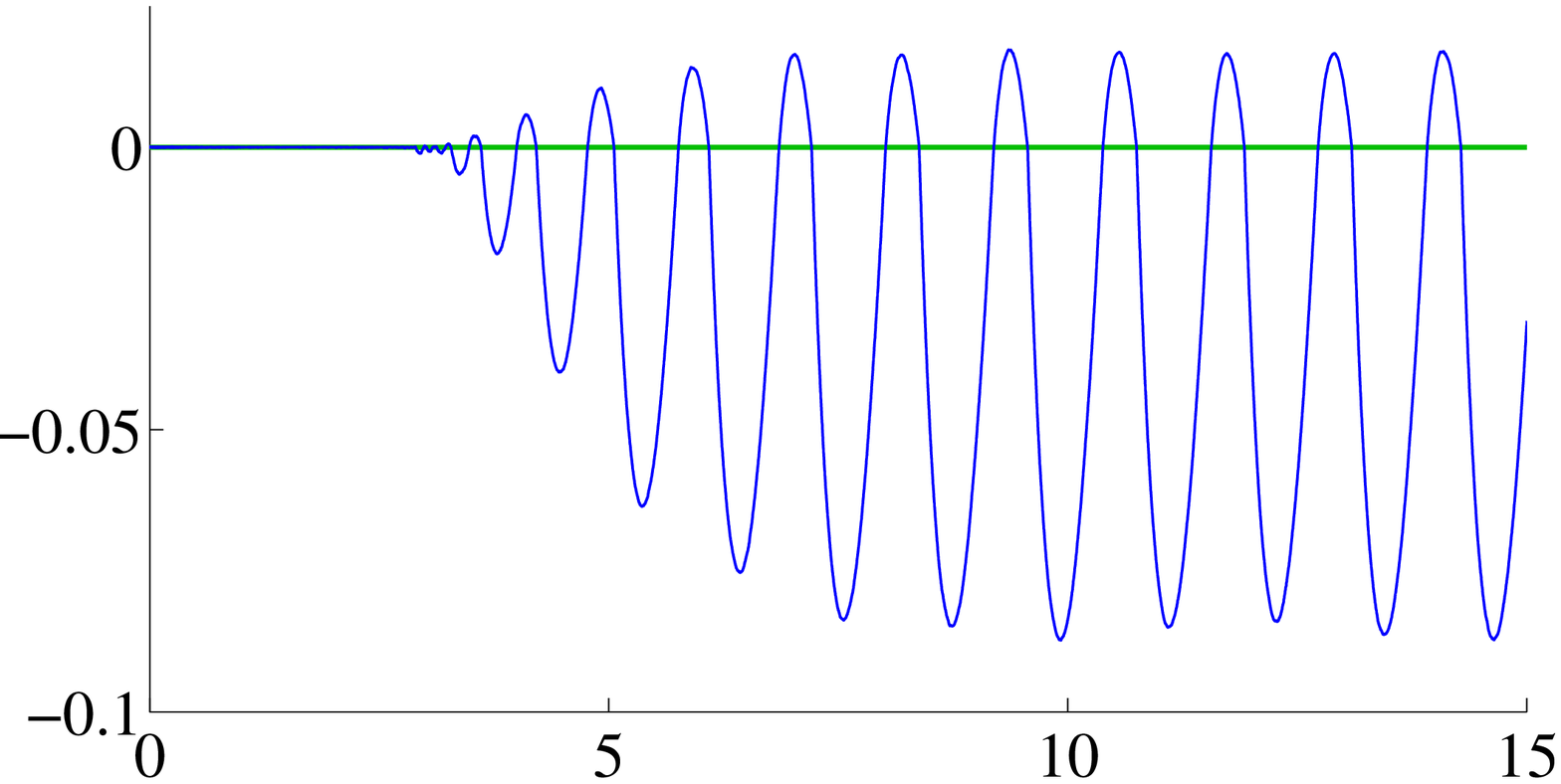}}
\put(8.5,4){\includegraphics[height=6cm]{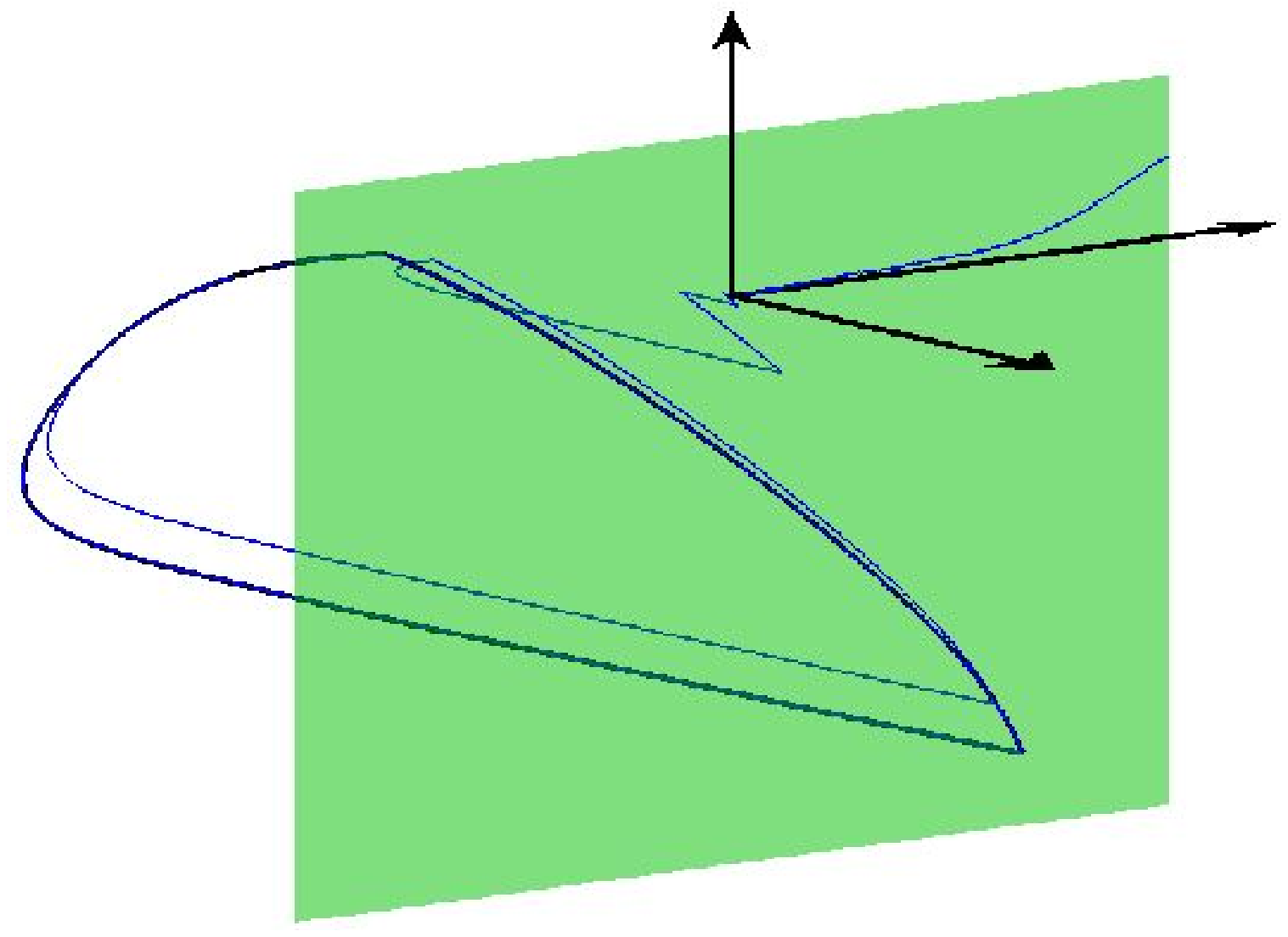}}
\put(8.5,0){\includegraphics[height=4cm]{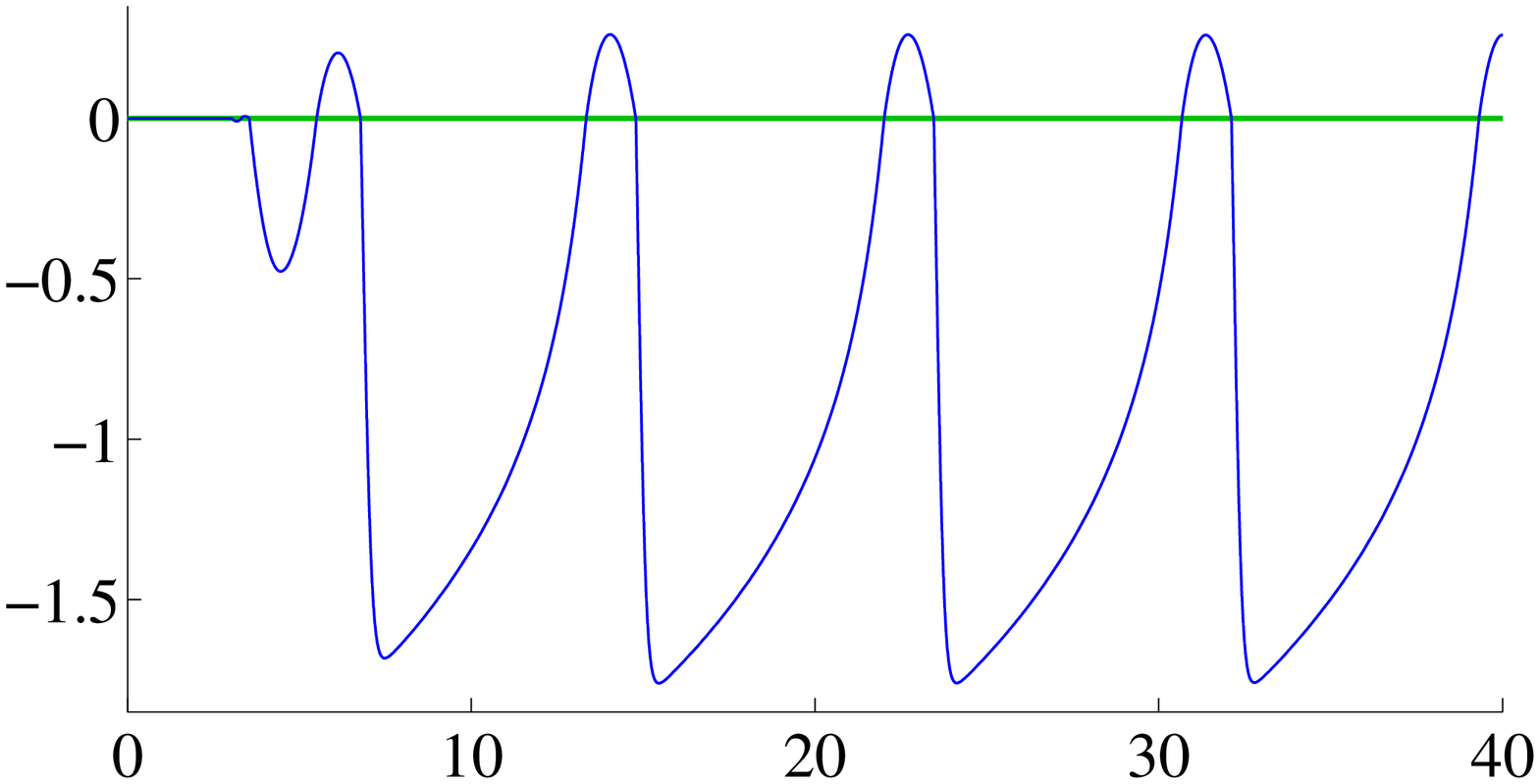}}
\put(1.1,9.5){\large \sf A}
\put(9.6,9.5){\large \sf B}
\put(7.56,5.7){\footnotesize $x$}
\put(6.5,7.05){\footnotesize $y$}
\put(4.17,9.63){\footnotesize $z$}
\put(.9,6.9){\footnotesize $\Gamma$}
\put(4.3,-.17){\small $t$}
\put(.2,3.18){\small $x$}
\put(15.3,7.64){\footnotesize $x$}
\put(16.14,8.7){\footnotesize $y$}
\put(13.29,9.8){\footnotesize $z$}
\put(9.8,8.34){\footnotesize $\Gamma$}
\put(12.8,-.17){\small $t$}
\put(8.7,3.33){\small $x$}
\end{picture}
\caption{
\label{fig:phaseqq23}
Panel A shows a typical sample solution, both in phase space and as a time series,
for (\ref{eq:sde}) with (\ref{eq:VmVpEx23}) and (\ref{eq:GLGRex2}) using $D = I$ and $\ee = 0.001$.
Panel B shows an typical sample solution using instead (\ref{eq:GLGRex3}).
Both solutions start at $X = (0,1,1)$, pass near the two-fold $X = (0,0,0)$,
then approach a stable periodic orbit $\Gamma$.
The solutions were computed using the Euler-Maruyama method
with a step-size of $\Delta t = 10^{-5}$.
}
\end{center}
\end{figure}

A sample solution to (\ref{eq:sde})
using (\ref{eq:VmVpEx23})-(\ref{eq:GLGRex2}) is shown in Fig.~\ref{fig:phaseqq23}-A. 
The initial point is $X = (0,1,1)$, and we denote this solution as $\tilde{\varphi}_{\ee}(t;0,1,1,0)$.
The deterministic solution, $\tilde{\varphi}(t;0,1,1,0)$,
slides into the two-fold in a time $t_0 = 3.0445$ (to four decimal places).
The perturbed solution $\tilde{\varphi}_{\ee}(t;0,1,1,0)$ initially follows a random path close to $\tilde{\varphi}(t;0,1,1,0)$ along the discontinuity surface, 
and at $t = t_0$ is located near the two-fold.
While $t \approx t_0$, the noise affects $\tilde{\varphi}_{\ee}(t)$ qualitatively.
After this time $\tilde{\varphi}_{\ee}(t)$ spirals outward.
The initial portion of this spiralling motion is close to the two-fold,
and so the higher order terms $G_L$ and $G_R$ have little influence.
At later times $\tilde{\varphi}_{\ee}(t)$ is located relatively far from the two-fold.
Here $G_L$ and $G_R$ are important and $\tilde{\varphi}_{\ee}(t)$ approaches
the stable periodic orbit of (\ref{eq:odePerturbed}), $\Gamma$.


We computed $10^4$ sample solutions analogous to the solution shown in Fig.~\ref{fig:phaseqq23}-A
(i.e.~with the same initial point and parameter values).
Fig.~\ref{fig:phaseHistograms}-A shows a histogram of the phase $\phi_T$ of these solutions.
Here we used $T = 15$ because transient dynamics appears to have decayed by this time.
We notice that the phase is roughly uniformly distributed on $[0,2 \pi)$.

Fig.~\ref{fig:phaseqq23}-B shows a sample solution using (\ref{eq:GLGRex3}) instead of (\ref{eq:GLGRex2}).
As in panel A, the solution approaches the two-fold
(the deterministic sliding time is $t_0 = 2.9763$, to four decimal places),
then spirals out toward a stable periodic orbit $\Gamma$.
Fig.~\ref{fig:phaseHistograms}-B shows a histogram of the phase $\phi_T$
of $10^4$ sample solutions, using $T = 40$.
Again the phase is highly random, but in this case
the phase distribution is not well-approximated by a uniform distribution.
A phase of $\phi_T \approx \frac{3 \pi}{2}$ appears to be about twice as likely
as a phase of $\phi_T \approx \frac{\pi}{2}$.
In further simulations (not shown) with different values of $\ee$ and $T$ we observed similar phase distributions
to those shown in Fig.~\ref{fig:phaseHistograms}.

In the next section we combine the polar coordinatisation of the local dynamics given in \S\ref{sub:polar},
with the global return time function $f$ for orbits spiralling out from the two-fold
given in \S\ref{sub:returnTimes},
in order to construct approximations for the phase distributions of Fig.~\ref{fig:phaseHistograms}.

\subsection{The probability distribution of the asymptotic phase}
\label{sub:pdf}

Let $X_0 \in \mathbb{R}^3$ be such that the deterministic forward orbit
of this point, $\tilde{\varphi}(t;X_0,0)$,
is located at the two-fold at some time $t_0 > 0$.
For sample solutions $\tilde{\varphi}_{\ee}(t;X_0,0)$,
we are only interested in the phase $\phi_T$ at large values of $T$
(in order to adequately approximate the asymptotic phase $\phi$)
but our analysis requires considering all $T > t_0$.

For any $T > t_0$, let $p_T(s_T)$ denote the probability density function for the value of $s_T$
(the previous time of intersection with the discontinuity surface at a point with $y > 0$).
We begin by explaining why, for intermediate values of $T-t_0$, specifically $\ee \ll T-t_0 \ll 1$,
it is suitable to assume $s_T - t_0$ has a reciprocal probability distribution, that is
\begin{equation}
p_T(s_T) = \frac{1}{\ln \left( \frac{T-t_0}{f^{-1}(T)-t_0} \right) (s_T - t_0)} \;.
\label{eq:pT}
\end{equation}

Since the noise amplitude $\ee$ is small,
if $T-t_0 \ll 1$ then an arbitrary sample solution
$\tilde{\varphi}_{\ee}(t)$ will lie near the two-fold with high probability.
Thus for the purposes of computing $s_T$
we can ignore the higher order terms $G_L$ and $G_R$.
Also if $\ee \ll T-t_0$,
then with high probability an arbitrary sample solution 
will lie sufficiently far from the two-fold that for the purposes of computing $s_T$
we can ignore the noise.
Therefore, here it is suitable to restrict our attention to the normal form (\ref{eq:ode}),
and we work with this system in polar coordinates (\ref{eq:odePolar}).

Any solution to (\ref{eq:odePolar}) that limits to the two-fold as $t \to t_0$ (with $t > t_0$)
is given by (\ref{eq:solnPolar}) for some $C \in [0,2 \pi)$.
The system (\ref{eq:odePolar}) is independent of $\theta$
which implies we should treat $C$ as a random variable with a uniform distribution on $[0,2 \pi)$.

Given $C$, we can calculate $s_T$ by using (\ref{eq:solnPolar}) to solve $\theta(t) = 0$ for $s_T$.
We write $\frac{\beta}{\alpha} \ln(T-t_0) = 2 \pi n + \hat{\theta}$,
for some $n \in \mathbb{Z}$ and $\hat{\theta} \in [0,2 \pi)$.
By (\ref{eq:solnPolar}), if $0 \le C < 2 \pi - \hat{\theta}$ then
\begin{equation}
\frac{\beta}{\alpha} \ln(s_T-t_0) + C = 2 \pi n \;,
\label{eq:sTcalc1}
\end{equation}
and if $2 \pi - \hat{\theta} \le C < 2 \pi$ then
\begin{equation}
\frac{\beta}{\alpha} \ln(s_T-t_0) + C = 2 \pi (n+1) \;.
\label{eq:sTcalc2}
\end{equation}
Since $C$ is uniformly distributed,
by (\ref{eq:sTcalc1}) and (\ref{eq:sTcalc2}), $\ln(s_T-t_0)$ is also uniformly distributed.
Therefore, in this approximation, $s_T-t_0$ has a reciprocal probability distribution
as given by (\ref{eq:pT}).

As sample solutions $\tilde{\varphi}_{\ee}(t)$ continue to evolve outward from the two-fold,
since there are no other singular points for solutions to encounter,
the noise only has the effect of diffusing intersection times $s_T$ by a small amount.
Therefore in order to approximate $p_T$ for larger values of $T > t_0$,
we can again ignore the noise (as long as $T$ is not so large
that the contribution of the accumulated small diffusive effects is significant).
In this approximation, $p_T$ can be expressed in terms of $p_{f^{-1}(T)}$ by iterating
the density under $f$, specifically,
\begin{equation}
p_T(s_T) = p_{f^{-1}(T)} \left( f^{-1}(s_T) \right) \frac{d f^{-1}}{d s_T} \;.
\label{eq:pfTs}
\end{equation}
Iterating $n$ times gives
\begin{equation}
p_T(s_T) = p_{f^{-n}(T)} \left( f^{-n}(s_T) \right) \frac{d f^{-n}}{d s_T} \;.
\label{eq:pfnTs}
\end{equation}

\begin{figure}[t!]
\begin{center}
\setlength{\unitlength}{1cm}
\begin{picture}(16.5,6)
\put(0,0){\includegraphics[height=6cm]{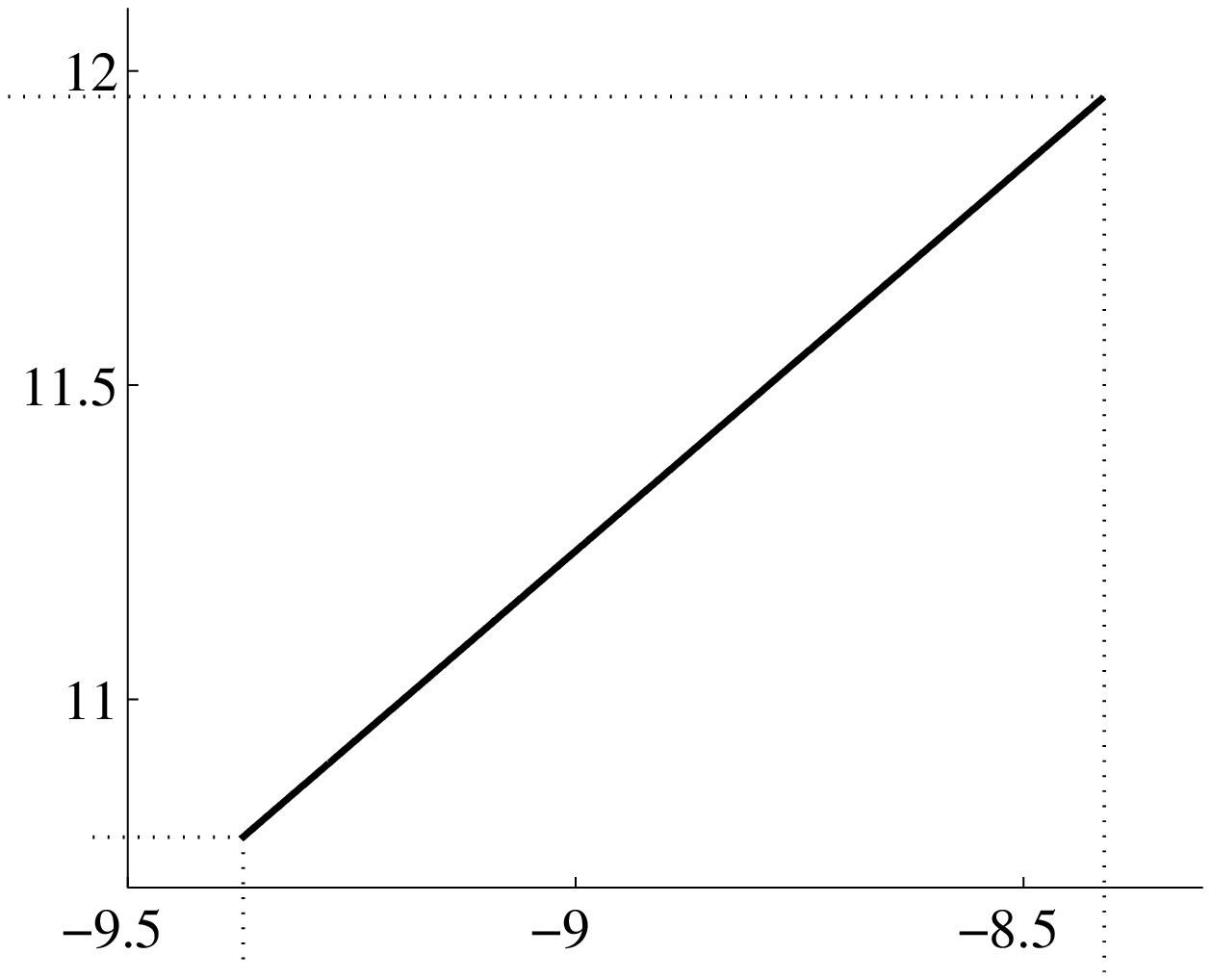}}
\put(8.5,0){\includegraphics[height=6cm]{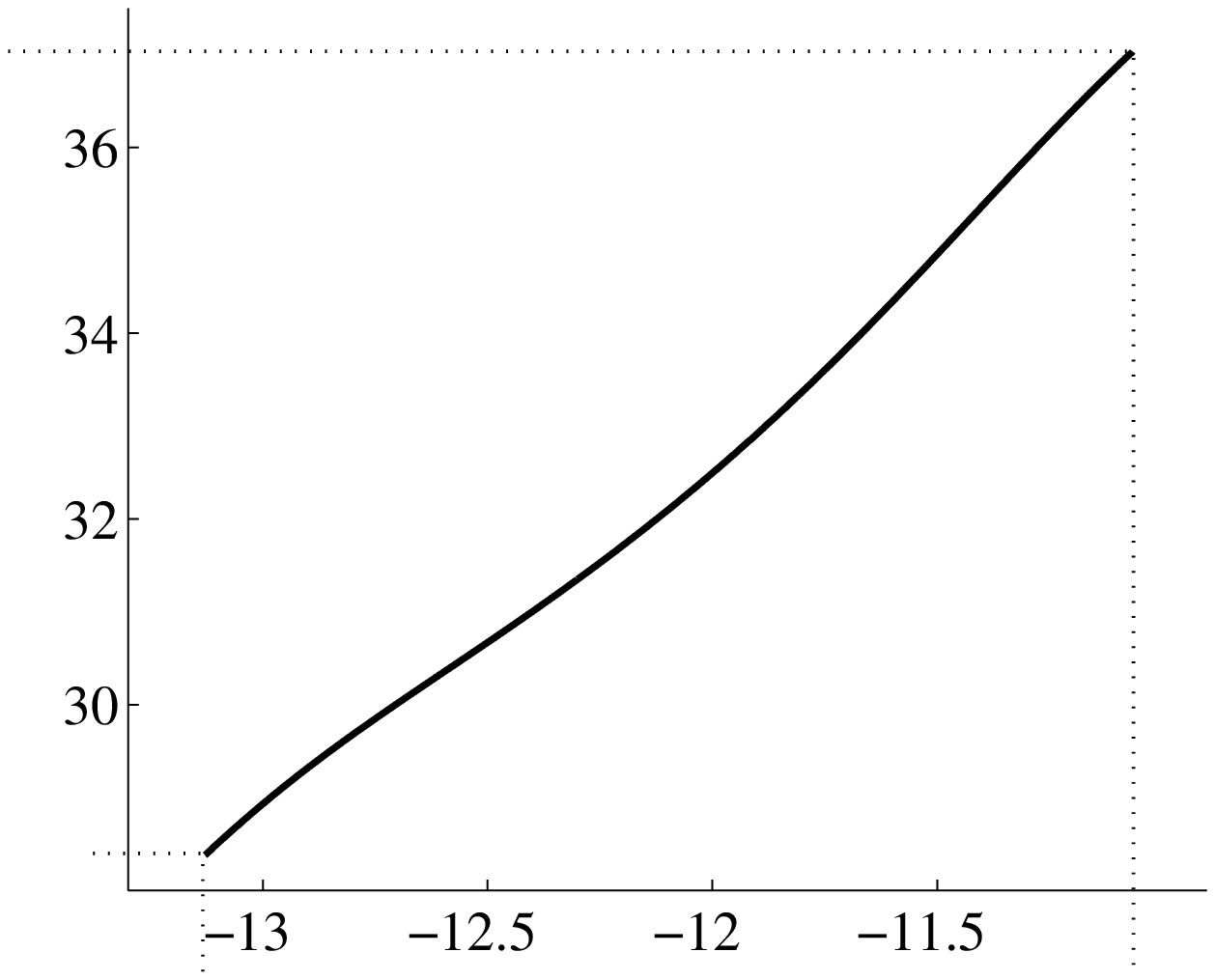}}
\put(2.2,5.8){\large \sf A}
\put(10.7,5.8){\large \sf B}
\put(1.1,.2){\scriptsize $\ln \left( f^{-n-1}(T-t_0) \right)$}
\put(5.9,.2){\scriptsize $\ln \left( f^{-n}(T-t_0) \right)$}
\put(0,1.2){\scriptsize $f^{-1}(T-t_0)$}
\put(.26,5.39){\scriptsize $T-t_0$}
\put(9.6,.2){\scriptsize $\ln \left( f^{-n-1}(T-t_0) \right)$}
\put(14.4,.2){\scriptsize $\ln \left( f^{-n}(T-t_0) \right)$}
\put(8.5,1.1){\scriptsize $f^{-1}(T-t_0)$}
\put(8.76,5.65){\scriptsize $T-t_0$}
\put(4,-.17){\small $\ln(a)$}
\put(0,3.8){\small $f^n(a)$}
\put(12.5,-.17){\small $\ln(a)$}
\put(8.5,3.8){\small $f^n(a)$}
\end{picture}
\caption{
Plots of $f^n$, using $n = 10$, where $f$ is the return time function defined in \S\ref{sub:returnTimes}.
Panel A corresponds to (\ref{eq:GLGRex2}), for which $t_0 \approx 3.0445$, and we used $T = 15$.
Panel B corresponds to (\ref{eq:GLGRex3}), for which $t_0 \approx 2.9763$, and we used $T = 40$.
\label{fig:phasefn23}
}
\end{center}
\end{figure}

We can therefore approximate $p_T$ for a large value of $T$ by using (\ref{eq:pfnTs})
with a value of $n$ that is sufficiently large that $f^{-n}(T) - t_0$ is small,
and so the reciprocal probability density function (\ref{eq:pT}) can be used for $p_{f^{-n}(T)}$.
This is the manner by which we obtained the two approximations shown in Fig.~\ref{fig:phaseHistograms}, using $n = 10$ in both cases.
The iterative procedure (\ref{eq:pfnTs}) was performed numerically
because analytic expressions for $f$ appear to be unavailable.

Fig.~\ref{fig:phasefn23} shows plots of $f^n$ for both (\ref{eq:GLGRex2}) and (\ref{eq:GLGRex3}).
We note that if $f^n(a)$ is an affine function of $\ln(a)$ then
our procedure generates a uniform distribution for $p_T$.
Indeed in Fig.~\ref{fig:phasefn23}-A $f^n$
is indistinguishable from an affine function on the given scale
and so in Fig.~\ref{fig:phaseHistograms}-A the distribution is approximately uniform.
In Fig.~\ref{fig:phasefn23}-B, $f^n$ has a noticeable nonlinearity
and for this reason the distribution in Fig.~\ref{fig:phaseHistograms}-B is significantly non-uniform.



\section{Desynchronising a collection of smooth oscillators}
\label{sec:desynch}
\setcounter{equation}{0}


Suppose we wish to break the synchrony of a collection of coordinated oscillators.
In \cite{DaHe09} this is achieved by applying a control
that pushes the state of the oscillators toward an unstable equilibrium,
where small random perturbations efficiently randomise the phase of the oscillators.
Upon removal of the control action, each oscillator returns to the original regular periodic motion
but now the oscillators are desynchronised.
Here we propose a ``fast'' method to achieve this, using a two-fold as the phase singularity rather than an equilibrium, in which case 
there is no slowing of the dynamics as the oscillators return to periodic motion.


To illustrate our method we use the Hopf bifurcation normal form
\begin{equation}
\left( \dot{x}, \dot{y} \right) = F(x,y) =
\left( x - y - x(x^2+y^2), x + y - y(x^2+y^2) \right) \;,
\label{eq:HopfNormalForm}
\end{equation}
and apply a discontinuous control that creates a two-fold singularity.
In principle this can be achieved with any system exhibiting a stable periodic orbit,
including excitable systems such as the FitzHugh-Nagumo equations \cite{DaHe09}.

Consider the two-dimensional stochastic differential equation
\begin{equation}
\big( dx(t), dy(t) \big) =
\big( F(x(t),y(t)) + H(t-t_1) H(t_2-t) c(t) \big) \,dt + \ee \,dW(t) \;,
\label{eq:stochHopfNormalForm}
\end{equation}
where $W(t)$ is a standard two-dimensional Brownian motion,
and $0 \le \ee < 1$ is the noise amplitude.
$H$ is the Heaviside function
and $t_1$ and $t_2$ are the start and end times of a control action given by
\begin{equation}
c(t) = \begin{cases}
(a_1 t, a_2) \;, & x \le 0 \\
(a_3 t, a_4) \;, & x > 0
\end{cases} \;,
\label{eq:c}
\end{equation}
where $a_1, a_2, a_3, a_4 \in \mathbb{R}$ are control parameters.


By treating the time $t$ as a third dynamic variable, i.e.~with $\dot{t} = 1$,
the system is three-dimensional and
is piecewise-smooth when $t_1 < t < t_2$ with the discontinuity surface $x = 0$.
The fold lines on $x = 0$ (the lines where $\dot{x} = 0$) 
are $y = a_1 t$ 
and $y = a_3 t$, 
so the origin $(x,y,t) = (0,0,0)$ is a two-fold.
Via elementary calculations we find we need
$a_2 < a_1$, $a_3 < a_4$ and $a_1 \ne a_3$ in order for the two-fold
to be generic and for both folds to be invisible, as in (\ref{eq:ode}).
We also require $a_1 < a_3$ in order to have $V^-,V^+ < 0$ and $V^- V^+ > 1$ (\ref{eq:VmVp}).
In summary, we require
\begin{equation}
a_2 < a_1 < a_3 < a_4 \;.
\label{eq:controlParametersConstraint}
\end{equation}


\begin{figure}[t!]
\begin{center}
\setlength{\unitlength}{1cm}
\begin{picture}(15.5,4.5)
\put(0,0){\includegraphics[height=4.5cm]{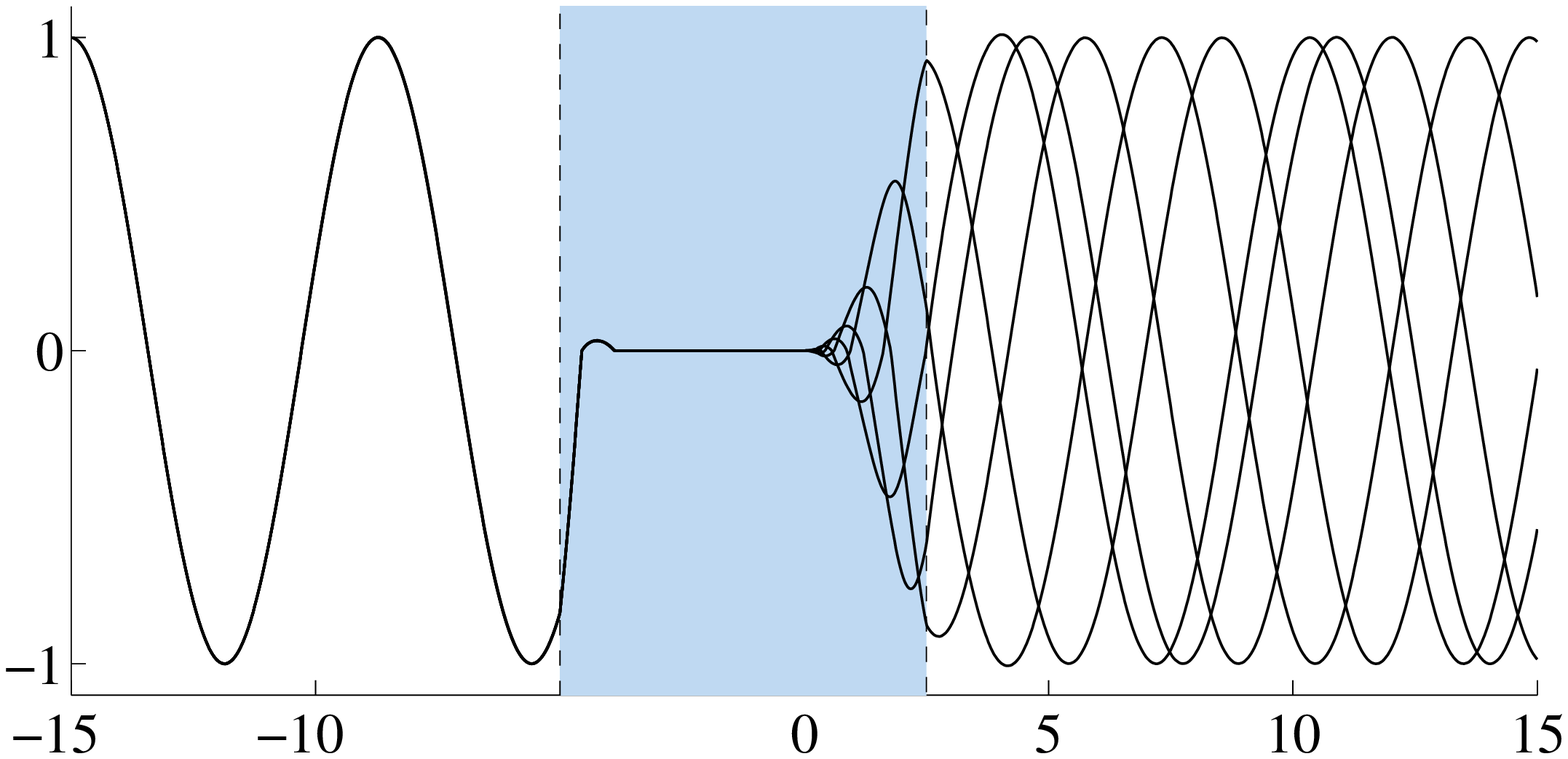}}
\put(9.5,0){\includegraphics[height=4.5cm]{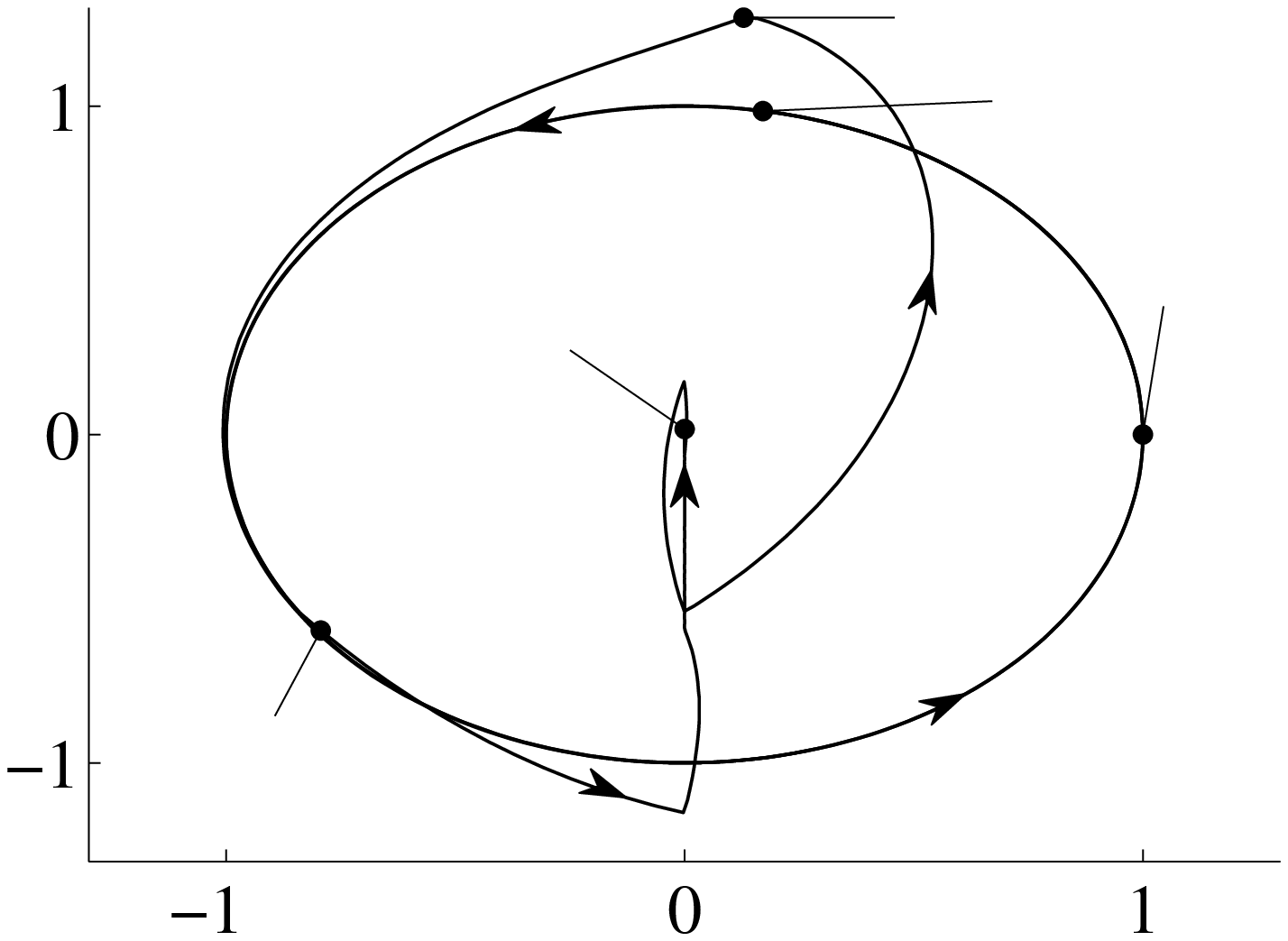}}
\put(1,4.4){\large \sf A}
\put(10.5,4.4){\large \sf B}
\put(4.59,-.04){\small $t$}
\put(0,2.43){\small $x$}
\put(3.2,.28){\footnotesize $t_1$}
\put(5.27,.28){\footnotesize $t_2$}
\put(3.55,3.9){\footnotesize control on}
\put(12.63,0){\small $x$}
\put(9.5,2.45){\small $y$}
\put(14.75,3.2){\tiny $t=-15$}
\put(10.6,1.05){\tiny $t=t_1$}
\put(11.83,2.97){\tiny $t=0$}
\put(13.8,4.37){\tiny $t=t_2$}
\put(14.2,4){\tiny $t=15$}
\end{picture}
\caption{
Panel A shows five sample solutions to (\ref{eq:stochHopfNormalForm})
using (\ref{eq:a1a2a3a4}), (\ref{eq:t1t2}) and $\ee = 0.001$,
and the initial condition $(x,y) = (1,0)$ at $t = -15$.
The control (\ref{eq:c}) causes the solutions to pass close to a two-fold at $t = 0$,
effectively randomising the phase.
Panel B shows one of the solutions in phase space,
with the location of the solution highlighted at key times.
The sample solutions were computed using the Euler-Maruyama method
with a step-size of $\Delta t = 10^{-5}$.
\label{fig:sampleHopf}
} 
\end{center}
\end{figure}

Fig.~\ref{fig:sampleHopf} illustrates the phase randomisation using
\begin{equation}
a_1 = -0.2 \;, \qquad
a_2 = -1 \;, \qquad
a_3 = 0.2 \;, \qquad
a_4 = 1 \;,
\label{eq:a1a2a3a4}
\end{equation}
and
\begin{equation}
t_1 = -5 \;, \qquad
t_2 = 2.5 \;.
\label{eq:t1t2}
\end{equation}
In simulations we found that with relatively small values of the $a_i$,
a large negative value of $t_1$ is required in order
for the control to direct orbits into the two-fold, 
and that with large values of $a_i$
the distributions of the asymptotic phase were substantially non-uniform.


\section{Discussion}
\label{sec:conc}
\setcounter{equation}{0}

In this paper we have studied orbits that pass through an invisible two-fold
(Teixeira singularity) then limit to a stable periodic orbit.
We considered the presence of small noise
in order to remove the ambiguity of evolution through the two-fold.
We found that the phase of the limiting periodic motion is highly randomised
and that the probability distribution of the phase depends crucially on the nature
of the nonlinear dynamics experienced by orbits between the two-fold and the periodic orbit.
By using polar coordinates to describe the motion of orbits as they initially depart from the two-fold,
and a one-dimensional return map to describe the global dynamics approaching the periodic orbit,
we constructed approximations to the probability density function of the asymptotic phase
that matched well to the results of numerical simulations, Fig.~\ref{fig:phaseHistograms}.

In \S\ref{sec:desynch} we showed how a simple discontinuous control law
can generate a two-fold in a smooth system.
We propose that in this fashion the two-fold can be used to
desynchronise a collection of oscillators and that this may have the following advantages to
desynchronisation using an unstable equilibrium.

\begin{enumerate}
\renewcommand{\labelenumi}{\roman{enumi})}
\item
With an unstable equilibrium,
the control action must direct the state of each oscillator to near the equilibrium,
and the effectiveness of the randomisation correlates with the accuracy of this action.
With a two-fold, however, the control does not need not be as precise because it
only needs to direct orbits to the basin of attraction of the two-fold.
\item
With an unstable equilibrium,
the effect of the randomisation is due to inherent stochasticity in the system
and any artificial randomness in the control law.
With a two-fold, the randomisation is inherent in the 
ambiguity of forward evolution through the two-fold.
\item
Orbits dwell near unstable equilibria and
so can take a relatively long time to return to regular periodic motion.
With a two-fold there is no slowing of the dynamics,
hence we refer to our desynchronisation as ``fast'' phase randomisation.
Indeed, in Fig.~\ref{fig:sampleHopf}
we see that solutions return quickly to approximately
regular periodic motion after the control is turned off.
Whether this truly constitutes fast desynchronisation requires a study of the full time the control action is required to act for. This, and a study of practical control actions, 
remain for future work.
\end{enumerate}

Teixeira's paper \cite{Te90} inspired a legacy of intrigue around a specific case of the two-fold singularity, first studied more generally in \cite{Fi88}. Interest has only increased as its role as a determinacy-breaking singularity has become more clear \cite{Je11,CoJe11}. This paper begins to reveal the practical side of these insights, including how two-folds may manifest in physical systems, and how they might be put to use as tools for control. Little is still known about where two-folds appear naturally in physical systems. Similarly little is known about what applications they may have in control systems, but a role as a phase randomiser is suggested here.
Perhaps the obvious next step is to design implementable control circuits to investigate the practical obstacles and opportunities that they present.

\appendix

\section{Proof of Proposition \ref{pr:polar}}
\label{app:polarDerivation}
\setcounter{equation}{0}

\begin{proof}
The $y$-value of $\psi_a(a)$ is given by (\ref{eq:yValueOnZeta}),
and so by (\ref{eq:solnPolarAlt}) the $r$-value of $\psi_a(a)$ is equal to $\alpha a$.
Since $\psi_a(a) \in \zeta$, the $y$ and $r$-values of $\psi_a(a)$ are the same,
see (\ref{eq:theta0}), hence $\alpha$ is given by (\ref{eq:alphabeta}).

At the start of \S\ref{sub:polar} we showed
that, given $x < 0$ and $y \in \mathbb{R}$,
if $\tau_L > 0$ is given by (\ref{eq:tauL}) and $y_0 = y - V^- \tau_L$,
then evolving (\ref{eq:ode}) forward from $(0,y_0,\gamma y_0)$ for a time $\tau_L$
takes us to the point $(x,y,z)$,
where $x = \frac{-1}{V^-} \Xi(y,z)$ (\ref{eq:Xi}).
Moreover, this defines a bijection from the region $x < 0$, $y \in \mathbb{R}$
to the region $y_0 > 0$, $0 < \tau_L < -2 \gamma y_0$.

By (\ref{eq:odePolar}), evolving $(r,\theta) = (y_0,0)$ for a time $\tau_L$ takes us to the point
\begin{equation}
(r,\theta) = \left( y_0 + \alpha \tau_L ,\, \frac{\beta}{\alpha}
\ln \left( \frac{\alpha \tau_L}{y_0} + 1 \right) \right) \;.
\label{eq:rThetaLeft}
\end{equation}
This is a bijection from $y_0 > 0$, $0 < \tau_L < -2 \gamma y_0$
to $r > 0$, $0 < \theta < \frac{\beta}{\alpha} \ln(1 - 2 \alpha \gamma)$
(assuming $\beta > 0$).
By substituting (\ref{eq:tauL}) and $y_0 = y - V^- \tau_L$ into (\ref{eq:rThetaLeft}) and simplifying,
we arrive at (\ref{eq:r}) and (\ref{eq:theta}) for $x < 0$.

Similarly, given $x > 0$ and $y \in \mathbb{R}$,
let $\tau_R < 0$ be given by (\ref{eq:tauR}) and $y_0 = y - \tau_R$.
This is a bijection to $y_0 > 0$, $-2 y_0 < \tau_R < 0$,
and evolving (\ref{eq:ode}) backward from $(0,y_0,\gamma y_0)$ for a time $\tau_R$
takes us to $(x,y,z)$,
where $x = \frac{1}{V^+} \Xi(y,z)$ (\ref{eq:Xi}).

In polar coordinates evolving $(r,\theta) = (y_0,0)$ for a time $\tau_R$ takes us to
\begin{equation}
(r,\theta) = \left( y_0 + \alpha \tau_R ,\, \frac{\beta}{\alpha}
\ln \left( \frac{\alpha \tau_R}{y_0} + 1 \right) + 2 \pi \right) \;,
\label{eq:rThetaRight}
\end{equation}
which is a bijection to $r > 0$, $\frac{\beta}{\alpha} \ln(1 - 2 \alpha) + 2 \pi < \theta < 2 \pi$.
Substituting (\ref{eq:tauR}) and $y_0 = y - \tau_R$
into (\ref{eq:rThetaRight}) leads to (\ref{eq:r}) and (\ref{eq:theta}) for $x > 0$.

Finally, by matching the limiting left and right values for $\theta$ on the negative $y$-axis
as given by (\ref{eq:theta}), we obtain
\begin{equation}
\theta = \frac{\beta}{\alpha} \ln(1 - 2 \alpha \gamma) =
\frac{\beta}{\alpha} \ln(1 - 2 \alpha) + 2 \pi \;.
\label{eq:negativeyaxis}
\end{equation}
By using the identity $\mu = \frac{1 - 2 \alpha \gamma}{1 - 2 \alpha}$,
by solving (\ref{eq:negativeyaxis}) for $\beta$
we obtain the expression for $\beta$ given in (\ref{eq:alphabeta}).

This completes the proof because the last calculation provides continuity of $\theta(x,y)$,
and by construction $r$ and $\theta$ satisfy the ordinary differential equations (\ref{eq:odePolar}).
\end{proof}

\section*{Acknowledgements}

MRJ's research is supported by EPSRC Fellowship grant EP/J001317/2.


\end{document}